\newcommand{\R}{\mathbb{R}}	                % not available on your system
\begin{document}
\title*{Conjugate times and regularity of the minimum time function with differential inclusions}
\titlerunning{Conjugate times and regularity of the minimum time function}
% Use \titlerunning{Short Title} for an abbreviated version of
% your contribution title if the original one is too long
\author{Piermarco Cannarsa and Teresa Scarinci}
% Use \authorrunning{Short Title} for an abbreviated version of
% your contribution title if the original one is too long
\institute{Piermarco Cannarsa \at Dipartimento di Matematica,
Universit\`a di Roma Tor Vergata, Via della Ricerca Scientifica 1, 00133 Roma, Italy, \email{cannarsa@mat.uniroma2.it}
\and Teresa Scarinci \at Dipartimento di Matematica,
Universit\`a di Roma Tor Vergata, Via della Ricerca Scientifica 1, 00133 Roma, Italy and CNRS, IMJ-PRG, UMR 7586, Sorbonne Universit\'es, UPMC Univ Paris 06, Univ Paris Diderot, Sorbonne Paris Cit\'e, Case 247, 4 Place Jussieu, 75252 Paris, France, \email{teresa.scarinci@gmail.com}}
%
% Use the package "url.sty" to avoid
% problems with special characters
% used in your e-mail or web address
%
\maketitle

\abstract*{Each chapter should be preceded by an abstract (10--15 lines long) that summarizes the content. The abstract will appear \textit{online} at \url{www.SpringerLink.com} and be available with unrestricted access. This allows unregistered users to read the abstract as a teaser for the complete chapter. As a general rule the abstracts will not appear in the printed version of your book unless it is the style of your particular book or that of the series to which your book belongs.
Please use the 'starred' version of the new Springer \texttt{abstract} command for typesetting the text of the online abstracts (cf. source file of this chapter template \texttt{abstract}) and include them with the source files of your manuscript. Use the plain \texttt{abstract} command if the abstract is also to appear in the printed version of the book.}

\abstract{This paper studies the regularity of the minimum time function, $T(\cdot)$, for a control system with a closed target, taking the state equation in the form of a differential inclusion. Our first result is a sensitivity relation which guarantees the propagation of the proximal subdifferential of $T$ along any optimal trajectory. Then, we obtain the local $C^2$ regularity of the minimum time function along optimal trajectories by using such a relation to exclude the presence of conjugate times.}
\section*{Introduction}
This paper aims to refine the study of the regularity properties of the value function of the  time optimal control problem in nonparameterized  form, that is, when the state equation is given as a differential inclusion. This problem seems hard to address by parametrization techniques, as it has been observed in the  recent papers \cite{MR2918253}, \cite{MR2873196}, \cite{MR3005035}, and \cite{MR3318195}.

Recall the minimum time problem 
$\mathcal P(x)$ 
consists of minimizing the time $T$ over all trajectories of a controlled dynamical system that originate from an initial point $x\in\R^n$ and terminate on a compact target set 
$\mathcal K\subseteq\R^n$.  Specifically, the problem $\mathcal P(x)$ is
\begin{equation}\label{Cost}
\min T,
\end{equation}
where the minimization is over all absolutely continuous arcs $y(\cdot)$ defined on an interval $[0,T]$ that satisfy the differential inclusion
\begin{equation}\label{intro:DI}
\begin{cases}
\dot y(t)\in F\bigl(y(t)\bigr) \quad\text{a.e. }t\in[0,T]\\
y(0)=x
\end{cases}
\end{equation}
and the terminal condition
$y(T)\in\mathcal K.
$ Here, $F:\R^n\rightrightarrows\R^n$ is a Lipschitz continuous multifunction having a sublinear growth such that the associated Hamiltonian  
\begin{equation*}
%\label{Hamiltonian}
H(x,p)=\sup_{v\in F(x)} \langle - v , p \rangle\qquad(x,p)\in \R^n\times\R^n
\end{equation*}
is semiconvex in $x$ and differentiable in $p$, whenever $p\neq 0$. 
%All these assumptions  will be discussed in detail in section~\ref{se:assumptions}.  
The minimum time function, $T(x)$, is defined as the optimal value in \eqref{Cost}. 

The main object of our analysis  are sensitivity relations, that is, inclusions that identify the dual arc as a suitable generalized gradient of the minimum time function $T(\cdot)$, evaluated along a given minimizing trajectory. The importance of  such relations is well acknowledged and will be made clear by the applications we provide to the differentiability of $T(\cdot)$.

Sensitivity relations have a long history dating back, at least, to the papers \cite{Clarke:1987:RMP:35498.35509}, \cite{MR923279}, \cite{Cannarsa:1991:COT:120771.118946}, and \cite {MR894990} that  studied  optimal control problems of Bolza type with finite time horizon. In \cite{MR1780579}, such relations were adapted to  the minimum time problem for the parameterized control system   
\begin{equation}\label{intro:DE}
\dot{y}(t)=f(y(t),u(t)) \quad t \geq 0
\end{equation}
assuming that:
\begin{description}
\item[(i)] $\mathcal K$ has the inner sphere property, and
\item[(ii)]  Petrov's controllability condition is satisfied on $\partial \mathcal K$.
\end{description}
For any optimal trajectory $y(\cdot)$ of \eqref{intro:DE} originating at a point $x$ in the controllable set, the result of \cite{MR1780579} ensures the existence of an arc $p$, called a dual arc, such that: 
\begin{itemize}
\item $(y,p)$ satisfies the Hamiltonian system
\begin{equation}\label{intro:CJ}
\left\{\begin{array}{rll}
-\dot{y}(t)&=& \nabla_p H (y(t),p(t))\\
\dot{p}(t)&\in & \partial_x^- H (y(t),p(t))
\end{array}\right.
\quad 
0\le t\le T(x)=:T
\end{equation}
together  with the {\em transversality} condition  
\begin{equation*}
p(T) =-\dfrac{\nu}{H(y(T),-\nu)} ,
\end{equation*}
where $\nu$ is any unit {\em inner} normal to $\mathcal K$ at $y(T)$;
\item $p(t)$ belongs to the {\em Fr\'echet} superdifferential of $T(\cdot)$ at $y(t)$ for all $t\in [0,T(x))$.
\end{itemize}
In  \cite{MR3005035}, the above result was extended to nonparameterized control  systems by developing an entirely different proof, based on the Pontryagin maximum principle rather than  linearization techniques as in \cite{MR1780579}. 
In \cite{MR3318195},  assumption (ii) above was removed,  still keeping (i) in force, showing that $p(t)$ is either a {\em proximal} or a {\em horizontal} supergradient of $T(\cdot)$ at $y(t)$,  for all $t\in [0,T(x))$,  depending on whether Petrov's condition is satisfied  or not at $y(T(x))$. 

With respect to sensitivity relations, the purpose of the present paper is %twofold:
%\begin{enumerate}
%\item[(a)] we aim to recover the conclusion of \cite{MR3318195} for a {\em general target} $\mathcal K$---even a point---that is, without assuming (i) above;
%\item[(b)] we will 
to derive analogous inclusions for the the  {\em proximal subdifferential} of the minimum time function; more precisely, we will prove  the propagation  of the subdifferential  of  $T(\cdot)$ along optimal trajectories.
%\end{enumerate}
%As for point (a), the importance of which has already been explained above, we would like to add that we have managed to remove assumption (i) by combining the result of \cite{MR3318195} with the fact that attainable sets from $\mathcal K$ gain the inner sphere property for positive times. Such a property, obtained in \cite{MR2209357} for parameterized control systems, was generalized to systems modeled by differential inclusions  in  \cite{MR2873196}.
%Point (b) above calls for some more explanations. 
By `propagation of the proximal subdifferential' we mean the fact that, if a proximal subgradient of $T(\cdot)$ exists at some point $x$ of the reachable set---so that the minimum time function is differentiable at $x$---and $y(\cdot)$ is a time optimal trajectory starting at $x$, then $p(t)$ belongs to the proximal subgradient of $T(\cdot)$ at $y(t)$  for all $t\in [0,T(x))$.
Such an invariance of the subdifferential with respect to the Hamiltonian flow associated with \eqref{intro:CJ} was pointed out in \cite{frankowska:hal-00851752} for functionals in the calculus of variations and \cite{Cannarsa2013791} for optimal control problems of Bolza type. A similar result was obtained in \cite{Nostro} for the Mayer problem and in \cite{frankowska:hal-00981788} for the minimum time problem for parameterized control system. In Theorem \ref{Lemma_sub_prox:ch3} of this paper, we show that such a property holds for the minimum time problem with a state equation in the form of a  differential inclusion.

We give two applications of the above sensitivity relations. The first one (Theorem~\ref{differentiability} below) ensures that the differentiability of $T(\cdot)$ propagates along an optimal trajectory, $y(\cdot)$, originating at a point $x$ of the controllable set if and only if Petrov's condition is satisfied at $y(T(x))$. This property follows directly from the above relations which guarantee that the corresponding dual arc is contained in both Fr\'echet semidifferentials whenever $T(\cdot)$ is differentiable at $x$ . Our second application concerns the local smoothness of the minimum time function along an optimal trajectory $y(\cdot)$, that is, 
the property of having continuous second order derivatives in a neighborhood of $\{y(t)~:~0\le t<T(x)\}$. In Theorem \ref{TheoLocalReg:ch3}, we show that this is indeed the case whenever $T(\cdot)$ has a proximal subgradient at the starting point of $y(\cdot)$.

In order to prove the local smoothness of $T(\cdot)$ along an optimal trajectory we need to analyze conjugate times, and give sufficient conditions to exclude the presence of such times. The notion of conjugate point is classical in the calculus of variations and optimal control. 
Recently, conjugate times have been considered in \cite{frankowska:hal-00981788} linearizing the system on the whole $\R^n$ but neglecting the role of the time variable. In such a paper, the degeneracy condition is assigned on the tangent space to the target, which is an $(n-1)$-dimensional space and the authors show that the absence of conjugate times at a point $x$ ensures the $C^1$-smoothness of $T(\cdot)$ along the trajectory originating at $x$. In this paper, we return to the `classical' definition of conjugate point and formulate a sufficient condition for smoothness in terms of conjugate times (see Theorem~\ref{TheoremCT}), much in the spirit of the result of \cite{MR1916603}. 

In this way we  deduce that, if the proximal subgradient of  $T(\cdot)$ is nonempty at some point $x$, then the minimum time function is locally smooth along the optimal trajectory originating at $x$.

The  paper is organized as follows.  Background material is collected in Section \ref{Chapter3:Notation}. In Section \ref{Chapter3:preliminary}, we recall preliminary results and discuss the main assumptions we work with. 
Section \ref{sec:CP} is devoted to the analysis of conjugate times. Section \ref{Chapter3:results} contains our sensitivity relations and their applications to regularity.

% For figures use
%
%\begin{figure}[b]
%\sidecaption
% Use the relevant command for your figure-insertion program
% to insert the figure file.
% For example, with the graphicx style use
%\includegraphics[scale=.65]{figure}
%
% If no graphics program available, insert a blank space i.e. use
%\picplace{5cm}{2cm} % Give the correct figure height and width in cm
%
%\caption{If the width of the figure is less than 7.8 cm use the \texttt{sidecapion} command to flush the caption on the left side of the page. If the figure is positioned at the top of the page, align the sidecaption with the top of the figure -- to achieve this you simply need to use the optional argument \texttt{[t]} with the \texttt{sidecaption} command}
%\label{fig:1}       % Give a unique label
%\end{figure}

\section{Notation}\label{Chapter3:Notation}
Let us fix the notation and list some basic facts. Further details can be found in several books, for instance \cite{MR1048347,MR709590,MR2662630,MR2041617}.\\
  
We denote by $|\cdot|$ the Euclidean norm in $\mathbb{R}^n$ and by
$\langle \cdot,\cdot \rangle$ the inner product.
$B(x,\epsilon)$ is
the closed ball of radius $\epsilon > 0$ centered at
$x$, and $S^{n-1}$ the unit sphere in $\mathbb{R}^n$. $ \mathbb{R}^{n\times n}$ is the set of $n\times n$ real matrices and $\parallel Q \parallel$
is the operator norm of a matrix $Q$, $Q^*$ is the transpose of $Q$, $\ker Q$ is the kernel of $Q$, while $I_n$ is the $n\times n$ identity
 matrix. Recall that $\parallel Q \parallel = \sup\{ |\langle Ax,x \rangle | : x \in S^{n-1} \}$  
 for any symmetric $n\times n$ real matrix $Q$. Moreover, co$\,E$, $\partial E$, $\overline{E}$ and $E^C$ are the convex hull, the boundary, the closure and the complement of a set $E\subset\mathbb{R}^n$, respectively.\\
 
Let $K$ be a closed subset of $\mathbb{R}^n$ and $x \in K$. $N_K^C (x)$ denotes the Clarke normal cone to $K$ at $x$.  A vector $v \in\mathbb{R}$ is a \emph{proximal (outer) normal} to $K$ at $x$, and we write $v \in N^P_K
(x)$, if there exists $\sigma=\sigma(x,v)$ such that, for all $y\in K$,
\begin{equation}\label{SI}
\langle v, y - x \rangle \leq \sigma |y-x|^2.
\end{equation}

In $K$ is a convex subset of $\mathbb{R}^n$, the proximal normal cone to $K$ at $x$ coincides with the convex normal cone to $K$ at $x$.\\
%If $K$
%is convex and $x\in\partial K$, then one can easily see that a vector $0\neq \xi_x\in \mathbb{R}^n$
%belongs to $N_K^P(x)$
%if and only if it is realized by a ball for a radius $R$. Moreover, 
A vector $\nu$ is a proximal inner normal to $K$ at $x$ if $\nu \in N^P_{K^C}(x)$. We call it an unit proximal inner normal to $K$ at $x$ if it belongs to the set $N^P_{K^C}(x)\cap S^{n-1}$.\\
We say that $K$ satisfies the \emph{inner sphere property} of radius $R$, $R>0$, if for every $x\in\partial K$ there exists a nonzero vector $\nu_x \in N^P_{K^C}(x)$ such that \eqref{SI} holds true with $\sigma = |\nu_x| (2R)^{-1}$ and $v=\nu_x$ and for all $y\in K^C$. Equivalently, for all $x\in\partial K$ there exists a vector $0\neq \nu_x\in N_{K^C}^P (x)$ \emph{realized by a ball} of radius $R$, that is,
\[
B\left( x + R \frac{\nu_x }{\mid \nu_x \mid},R \right) \subset K .
\]
Roughly speaking, if $K$ satisfies the inner sphere property of radius $R$ then we have an upper bound for the curvature of $\partial K$, even though $\partial K$ may be a nonsmooth set. Indeed, any $x\in\partial K$ belongs to some closed ball $y_x +R B(0,1)\subset K$. This fact suggests that, in some sense, the curvature of $\partial K$ is bounded above and excludes the presence of outward pointing corners on $\partial K$.\\

If $f : [t_0,t_1] \rightarrow \mathbb{R}^n$ is
continuous, $f\in C([t_0,t_1])$, define $\|f\|_{\infty} = \max_{t\in[t_0,t_1]} |f(t)|$.
Moreover, we usually refer to an absolutely continuous function $x:[t_0,t_1] \rightarrow \mathbb{R}^n$ as an arc. The space $C^k(\Omega)$,
 where $\Omega$ is an open subset of $\mathbb{R}^n$, is the space of all functions that are continuously differentiable $k$ times on $\Omega$.
The gradient of $f$ is $\nabla f(\cdot)$, if it does exist. Moreover, if $f$ is twice differentiable at some $x\in \Omega$, then  $\nabla^2 f(x)$ denotes the Hessian of $f$ at $x$. Let $f:\Omega \rightarrow \mathbb{R}$ be any real-valued function
defined on an open set $\Omega\subset\mathbb{R}^n$. Let $x\in \Omega$ and $p\in\mathbb{R}^n$. We say that:
\begin{itemize}
\item $p$ is a \emph{Fr\'echet subgradient} of $f$ at $x$, $p\in\partial^- f(x)$, if 
$$ \liminf_{y \rightarrow x} \frac{f(y)-f(x)-\langle p, y - x\rangle  }{\mid y - x \mid}\geq 0 ,$$
\item $p$ is a \emph{Fr\'echet supergradient} of $f$ at $x$, $p\in\partial^+ f(x)$, if 
$$\limsup_{y \rightarrow x} \frac{f(y)-f(x)-\langle p, y - x\rangle  }{\mid y - x \mid}\leq 0,$$
\item $p$ is a \emph{proximal subgradient} of $f$ at $x$, $p\in\partial^{-,P} f(x)$, if $\exists\; c,\;\rho \geq 0$ such that
$$ f(y)-f(x)-\langle p, y-x \rangle \geq - c \vert y-x \vert^2,\; \forall y \in B(x,\rho),$$
\item $p$ is a \emph{proximal supergradient} of $f$ at $x$, $p\in\partial^{+,P} f(x)$, if $\exists\; c,\;\rho \geq 0$ such that
$$ f(y)-f(x)-\langle p, y-x \rangle \leq  c \vert y-x \vert^2,\; \forall y \in B(x,\rho).$$
%\item $p$ is a \emph{horizontal proximal supergradient} of $f$ at $x$, $p\in\partial^{\infty} f(x)$, if $(p,0)\in N_{hypo(f)}^P (x,f(x))$, where $hypo(f)$ stands for the hypograph of the function $f$.
\end{itemize}
If $f$ is Lipschitz, $\zeta\in\mathbb{R}^n$ is a
\emph{reachable gradient} of $f$ at $x\in \Omega$ if there exists
a sequence $\lbrace x_j \rbrace \subset \Omega$ converging to $x$
such that $f$ is differentiable at $x_j$ for all $j\in
\mathbb{N}$ and $ \zeta = \lim_{ j \rightarrow \infty} \nabla
f(x_j).$ Let $\partial^* f(x)$ denote the set of all reachable
gradients of $f$ at $x$. The \emph{(Clarke) generalized gradient}
of $f$ at $x\in\Omega$, $\partial f(x)$, is the set
co$\, (\partial^* f(x))$.\\
For an open set $\Omega\subset \mathbb{R}^n$, $f :
\Omega\rightarrow \mathbb{R} $ is \emph{semiconcave} if it is
continuous in $\Omega$ and there exists a constant $c$ such that
$$ f(x + h) + f(x - h) - 2 f(x) \leq c | h |^2, $$ for all $x, h\in
\mathbb{R}^n$ such that $[ x - h, x + h] \subset \Omega$. We say
that a function $f$ is semiconvex on $\Omega$ if and only if $-f$
is semiconcave on $\Omega$. We recall below some properties of
semiconcave functions (for further details see, for instance,
\cite{MR2041617}).
\begin{proposition}
Let $\Omega\subset \mathbb{R}^n$ be  open, $f : \Omega \rightarrow
\mathbb{R}$ be a semiconcave function with semiconcavity constant
$c$, and let $x \in \Omega$. Then, $f$ is locally Lipschitz on
$\Omega$ and the following holds true
\begin{enumerate}
\item $p\in \mathbb{R}^n$ belongs to $\partial^+ f(x)$ if and only
if, for any $y \in \Omega$ such that $[y, x] \subset \Omega$,
\begin{equation}\label{Booo:ch3}
f(y) - f(x) - \langle p, y - x \rangle  \leq c |y - x|^2.
\end{equation}
\item $\partial f(x) = \partial^{+} f(x)=co\ ( \partial^{\ast} f(x))$.
\item If $\partial^+ f(x)$ is a singleton, then $f$ is differentiable at $x$.
\end{enumerate}
\end{proposition}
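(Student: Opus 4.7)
My strategy is to reduce all three claims to classical facts about concave functions via the elementary decomposition trick. On any convex open $\Omega_0 \subseteq \Omega$, the function $g(x) := f(x) - \frac{c}{2}|x|^2$ is concave: using $|x+h|^2+|x-h|^2 = 2|x|^2+2|h|^2$, the semiconcavity inequality for $f$ becomes $g(x+h)+g(x-h) \leq 2g(x)$, i.e.\ midpoint concavity, and continuity of $f$ (built into the definition of semiconcavity) upgrades this to full concavity. This decomposition immediately yields local Lipschitz continuity of $f$ and translates subdifferentials: $p \in \partial^+ f(x) \iff p - cx \in \partial^+ g(x)$, with analogous formulas for reachable gradients and for the Clarke gradient.

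For item (1), the ($\Leftarrow$) direction is immediate from the Fr\'echet definition. For ($\Rightarrow$), I invoke the standard fact that for a concave $g$, the supergradient $\partial^+ g(x)$ coincides with the set of affine functions globally supporting $g$ from above at $x$ on any convex neighborhood of $x$ inside the domain. Substituting $q = p - cx$ and $g = f - \frac{c}{2}|\cdot|^2$ and rearranging then delivers \eqref{Booo:ch3}, where the hypothesis $[y,x] \subset \Omega$ is used precisely to apply concavity along the segment.

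For item (2), the characterization in (1) exhibits $\partial^+ f(x)$ as an intersection of closed half-spaces in $p$, hence as a closed convex set. The inclusion $\partial^* f(x) \subseteq \partial^+ f(x)$ follows by passing to the limit in the quadratic bound from (1) at a sequence $x_j \to x$ of differentiability points (the constant $c$ is uniform on a neighborhood), and taking the convex hull gives $\mathrm{co}(\partial^* f(x)) \subseteq \partial^+ f(x)$. The reverse inclusion $\partial^+ f(x) \subseteq \partial f(x) = \mathrm{co}(\partial^* f(x))$ I would prove by separation: if some $p \in \partial^+ f(x)$ failed to lie in the compact convex set $\partial f(x)$, Hahn--Banach would furnish a direction $v$ with $f^{\circ}(x;v) < \langle p, v \rangle$, contradicting the chain $\langle p, v \rangle \leq \limsup_{t\downarrow 0} t^{-1}[f(x+tv)-f(x)] \leq f^{\circ}(x;v)$ extracted from (1). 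The identity $\partial f(x) = \mathrm{co}(\partial^* f(x))$ for locally Lipschitz functions is Clarke's classical representation via Rademacher's theorem.

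For item (3), if $\partial^+ f(x) = \{p\}$ then (2) gives $\partial f(x) = \{p\}$, and a singleton Clarke gradient at $x$ forces strict differentiability of the locally Lipschitz function $f$ at $x$. Alternatively, one can finish without invoking Clarke: (1) yields the upper bound $f(y) - f(x) - \langle p, y-x\rangle \leq c|y-x|^2$ near $x$, and pairing this with the semiconcavity inequality applied to $(x+h, x-h)$ produces a matching lower bound, so that $f$ is differentiable at $x$ with $\nabla f(x) = p$. The main obstacle will be the separation step in (2), where one must carefully identify $f^{\circ}(x;v)$ with the support function of $\partial f(x)$ and relate it to the one-sided quadratic bounds provided by (1); the other steps are either direct translations of concave-function facts or standard results about locally Lipschitz functions.
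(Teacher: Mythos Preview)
Your proof plan is sound, but note that the paper does not actually prove this proposition: it is stated in the preliminaries as a recalled fact, with a pointer to \cite{MR2041617} for details. So there is no ``paper's own proof'' to compare against; what you have written is essentially the standard argument one finds in that reference, built on the decomposition $g=f-\tfrac{c}{2}|\cdot|^2$.

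Two small remarks on your execution. First, your reduction in item~(1) actually yields the sharper inequality $f(y)-f(x)-\langle p,y-x\rangle\le \tfrac{c}{2}|y-x|^2$, since $\tfrac{c}{2}|y|^2-\tfrac{c}{2}|x|^2-\langle cx,y-x\rangle=\tfrac{c}{2}|y-x|^2$; this of course implies the stated bound with constant $c$, and the converse direction is unaffected. Second, your ``alternative'' route for item~(3) is not quite right as written: semiconcavity only gives \emph{upper} bounds on second differences, so pairing \eqref{Booo:ch3} with the semiconcavity inequality at $(x+h,x-h)$ does not by itself produce a matching lower bound on $f(x+h)-f(x)-\langle p,h\rangle$. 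The clean elementary argument uses instead that, for concave $g$, the one-sided directional derivative $g'(x;v)$ exists and equals $\min_{q\in\partial^+g(x)}\langle q,v\rangle$; when $\partial^+f(x)=\{p\}$ this forces $f'(x;v)=\langle p,v\rangle$ to be linear in $v$, and local Lipschitz continuity then upgrades G\^ateaux to Fr\'echet differentiability. Your first route via the singleton Clarke gradient is correct and suffices.
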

If $f$ is semiconvex, then (\ref{Booo:ch3}) holds reversing the inequality and the sign of the quadratic term, and the other two statements are true with the subdifferential instead of the superdifferential. \\

Let $M \subset \mathbb{R}^n $ be a \emph{$C^m$-manifold} of dimension $n-1$ and fix $\xi_0\in M$. Let $A\subset\mathbb{R}^{n-1}$ be an open set, let $\phi:A \rightarrow \mathbb{R}^{n}$ be a map of class $C^m$ such that $\phi(A) \subset M$, $D\phi(y)$ has rank equal to $n-1$ for all $y\in A$ and $\phi(\eta_0)=\xi_0$ for some $\eta_0\in A$. We call $\phi$ a local \emph{parameterization} of $M$ near $\xi_0.$ The components $(\eta_1,...\eta_{n-1})$ of a point $\eta=\phi^{-1}(\xi)\in A$ are usually called \emph{local coordinates} of $\xi\in M$.\\
An application $F:M \rightarrow \mathbb{R}^n$ is of class $C^k$ at $\xi_0 \in M$ if the map $F\circ \phi^{-1}:\phi(A) \rightarrow\mathbb{R}^n$ is of class $C^k$ at $\nu_0:=\phi^{-1} (\xi_0)$ for any local parameterization $\phi$ of $M$ near $\xi_0$. Equivalently, $F: M \rightarrow \mathbb{R}^n$ is of class $C^k$ at $\xi_0$ if there exists a local parameterization $\phi$ of $M$ near $\xi_0$ such that $F\circ \phi^{-1}$ is of class $C^k$ at $\eta_0:=\phi^{-1} (\xi_0)$.

\section{Assumptions and preliminary results}\label{Chapter3:preliminary}
The \emph{minimum time problem} $\mathcal{P}(x)$ consists of minimizing the time $T$ over all trajectories of a differential inclusion that start from an initial point $x\in\R^n$ and reach a nonempty compact set $\mathcal{K} \subseteq\R^n$, usually called \emph{target}. Specifically, for any absolutely continuous function $y^x(\cdot)\in AC([0,+\infty);\mathbb{R}^n)$ that solves the differential inclusion
\begin{equation}\label{DI}
\begin{cases}
\dot y(t)\in F\bigl(y(t)\bigr) \quad\text{a.e. }t\geq 0 \\
y(0)=x,
\end{cases}
\end{equation}
let us denote by 
$$\theta(y^x(\cdot)):=\inf \lbrace t\geq 0 :~ y^x(t)\in \mathcal{K} \rbrace$$ 
the first time at which the trajectory $y^x(\cdot)$ reaches the target $\mathcal{K}$ starting from $x$. By convention, we set $\theta(y^x(\cdot))=+\infty$ whenever $y^x(\cdot)$ does not reach $\mathcal{K}$. Here and throughout the paper, $F:\R^n\rightrightarrows\R^n$ is a given multifunction that satisfies the so-called \emph{Standing Hypotheses}:
\begin{description}
\item[(SH)]\quad
$\begin{cases}
{\bf 1)} \, F(x) \text{ is nonempty, convex, and compact for each }x\in \R^n, \\
{\bf 2)} \,  F\text{ is locally Lipschitz with respect to the Hausdorff metric}, \\
{\bf 3)} \, \text{there exists }\rho>0 \text{ so that }\max\{|v|:v\in F(x)\}\leq \rho(1+|x|).
\end{cases}$
\end{description}
The \emph{minimum time} function $T:\mathbb{R}^n\rightarrow [0,+\infty]$ is defined by: for all $x\in\mathbb{R}^n$, 
\begin{equation}\label{MINIMUM}
T(x):= \inf \lbrace \theta(y^x(\cdot)):~ y^x(\cdot) \mbox{ solves } \eqref{DI}  \rbrace.
\end{equation}  
$T(x)$ represents the minimum time needed to steer  the point $x$ to the target $\mathcal{K}$ along the trajectories of \eqref{DI}.  It is well-known that  $(SH)$ guarantees the existence of absolutely continuous solutions to \eqref{DI} defined on $[0,+\infty)$. Moreover, if $x$ is in the \emph{reachable set} $\mathcal{R}$ (i.e. $T(x)<+ \infty$) then $\mathcal{P}(x)$ has an optimal solution, that is, a solution to  \eqref{DI} that gives the minimum in \eqref{MINIMUM}. 
The main assumptions of this paper are expressed in terms of the Hamiltonian $H:\R^n\times\R^n\to\R$ associated to $F$, that is, the function defined by
\begin{equation}\label{Hamiltonian}
H(x,p)=\sup_{v\in F(x)} \langle - v , p \rangle.
\end{equation} 
We shall suppose that
$$ (H)
\left\{ 
\begin{array}{ll}
&\mbox{for  every } r>0 \\
&(i)\; \exists\ c \geq 0 \mbox{ so that }, \forall p\in S^{n-1}, x \mapsto H(x,p)\mbox{ is semiconvex on } B(0,r) \mbox{ with}\\
&\;\;\;\;\;\; \mbox{constant }  c_r, \\
&(ii)\; \nabla_p H(x,p) \mbox{ exists and is  Lipschitz continuous in } x \mbox{ on } B(0,r), \mbox{ uniformly } \\
&\;\;\;\;\;\; \mbox{for }p\in S^{n-1}.
\end{array}\right.$$

%\begin{description}
%\item[(H)]\quad
%$\begin{cases}
%\forall r>0 \\
%{\bf 1)} \, \exists\ c \geq 0 \mbox{ so that },\; \forall p\in S^{n-1},\;  x \mapsto H(\cdot,p)\mbox{ is semiconvex on } B(0,r)\\ \mbox{ with constant }  c, \\
%{\bf 2)} \, \nabla_p H(x,p) \mbox{ exists and is  Lipschitz in } x \mbox{ on } B(0,r), \mbox{ uniformly for }\\
%p \in \mathbb{R}^n \smallsetminus
%\lbrace 0\rbrace.
%\end{cases}$
%\end{description}
We recall that $(H)$ was introduced for the minimum time problem in \cite{MR2918253} to derive sufficient conditions for the semiconcavity of the minimum time function. We refer the reader to \cite{MR2728465,Nostro,SecondoNostro} for a detailed discussion of $(H)$. 
\begin{remark} 
Actually, in \cite{Nostro,MR2728465} the authors suppose that the Hamiltonian $H^+(x,p):= \sup_{v\in F(x)}\langle v,p\rangle$ satisfies $(H)$. On the other hand, it is easy to compute that $H^{+}(x,p)=H(x,-p)$, and so $H^+$ satisfies $(H)$ if and only if so does $H$.
\end{remark}
We recall below a classical result known as \emph{Maximum principle} for  the minimum time problem. It yields as necessary condition for the optimality of a trajectory $x(\cdot)$ the existence of a \emph{dual arc} $p(\cdot)$ such that the pair $(x,p)$ satisfies an Hamiltonian inclusion and a transversality condition.
\begin{theorem}\label{NC}
Assume that $(SH)$ and $(H)$ hold. Suppose $x(\cdot)$ is an optimal solution of the minimum time problem $\mathcal{P}(x)$, reaching the target $\mathcal{K}$ at time $T:=T(x)$. Then there exists an absolutely continuous arc $p : [0, T] \rightarrow \mathbb{R}^n$, $p(\cdot)\neq 0$, such that for a.e. $t\in [0,T]$,
\begin{equation}\label{CJ:ch3}
\left\{\begin{array}{rll}
-\dot{x}(t)&=& \nabla_p H (x(t),p(t)),\\
\dot{p}(t)&\in & \partial_x^- H (x(t),p(t)),
\end{array}\right.
\quad p(T) \in N_{\mathcal{K}}^C(x(T)). 
\end{equation}
\end{theorem}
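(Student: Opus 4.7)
The plan is to reduce the minimum-time problem to a standard Mayer-type problem on a fixed interval and then invoke a classical maximum principle for differential inclusions (in the Clarke or Vinter form), refining its conclusion with the help of hypothesis $(H)$. Concretely, since $T=T(x)$ is the optimal cost and $x(\cdot)$ is optimal, I would freeze the terminal time via the rescaling $s=t/T$ with $T$ treated as a free parameter, so that $x(\cdot)$ becomes a minimizer of a Mayer problem with dynamics $\dot y \in T\,F(y)$ on $[0,1]$, endpoints $y(0)=x$, $y(1)\in \mathcal K$, and cost $T$. The general PMP for differential inclusions then yields an absolutely continuous arc $p(\cdot)$ and a multiplier $\lambda\geq 0$, not both zero, such that the Weierstrass condition
\[
\bigl\langle -\dot x(t), p(t)\bigr\rangle \;=\; \max_{v\in F(x(t))}\bigl\langle -v,p(t)\bigr\rangle \;=\; H\bigl(x(t),p(t)\bigr)\qquad \text{a.e.,}
\]
the Euler--Lagrange adjoint inclusion
\[
\dot p(t)\in\mathrm{co}\bigl\{\zeta\in\R^n:(\zeta,p(t))\in N^C_{\mathrm{gph}\,F}\bigl(x(t),\dot x(t)\bigr)\bigr\},
\]
and the transversality $p(T)\in N^C_{\mathcal K}(x(T))$ all hold.

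Next, I would translate these generic conclusions into the refined form stated in \eqref{CJ:ch3}. By $(H)(ii)$, $H(x,\cdot)$ is differentiable at every $p\neq 0$; since $v\mapsto\langle -v,p\rangle$ is linear and $F(x)$ is convex and compact, this differentiability forces the maximizer in the Weierstrass condition to be unique, and the envelope identity identifies it as $-\nabla_p H(x(t),p(t))$, giving $-\dot x(t)=\nabla_p H(x(t),p(t))$ a.e. For the adjoint inclusion, I would exploit $(H)(i)$: semiconvexity of $H(\cdot,p)$ on bounded sets ensures that the Fr\'echet and proximal subdifferentials of $H(\cdot,p)$ coincide with its Clarke generalized gradient and are nonempty, and a standard duality identity between the Clarke normal cone to $\mathrm{gph}\,F$ and the $x$-subdifferential of the support function of $F$ recasts the Euler--Lagrange inclusion as $\dot p(t)\in\partial_x^- H(x(t),p(t))$ a.e.

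Finally, I would establish the nontriviality $p\not\equiv 0$. In the normal case $\lambda>0$, normalization $\lambda=1$ together with the Weierstrass identity gives the first integral $H(x(t),p(t))\equiv 1$ along the optimal trajectory, ruling out $p\equiv 0$; in the abnormal case $\lambda=0$, nontriviality of $(\lambda,p)$ in the PMP forces $p\not\equiv 0$ directly, and a Gr\"onwall-type estimate on the adjoint inclusion then prevents $p$ from vanishing at any time. The main technical obstacle is the second step above: turning the generic Euler--Lagrange inclusion, phrased with the Clarke normal cone to $\mathrm{gph}\,F$, into the Hamiltonian form $\dot p\in\partial_x^-H$ requires carefully combining the semiconvexity from $(H)(i)$ with the differentiability from $(H)(ii)$ via the duality between support functions of convex sets and their subdifferentials, which is the core nonsmooth-analysis content of the argument.
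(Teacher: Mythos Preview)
The paper does not prove Theorem~\ref{NC}: it is stated as a known result. Immediately after the theorem, the authors explain that the classical formulation (e.g.\ Clarke, \cite{MR709590}) gives the full Hamiltonian inclusion $(\dot x,\dot p)\in\partial H(x,p)$ with Clarke's generalized gradient of $H$ in $(x,p)$, and that the ``splitting Lemma'' of \cite{SecondoNostro} then shows that, under $(SH)$ and $(H)$, this joint inclusion is equivalent to the split form $-\dot x=\nabla_pH(x,p)$, $\dot p\in\partial_x^-H(x,p)$. That is the whole argument in the paper.

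Your plan is correct in spirit but takes a longer route. You start from the Euler--Lagrange inclusion written with the normal cone to $\mathrm{gph}\,F$, and you identify the hard step as converting this into $\dot p\in\partial_x^-H$ via duality between support functions and subdifferentials. The paper bypasses precisely this analysis by starting instead from the Hamiltonian form of the PMP, which already produces $\partial H(x,p)$, and then invoking the splitting lemma---whose content is exactly that $(H)(i)$ and $(H)(ii)$ force $\partial H(x,p)\subset\partial_x^-H(x,p)\times\{\nabla_pH(x,p)\}$. Both approaches ultimately rest on the same nonsmooth-analysis fact; the difference is where you take the PMP as input. If you adopt the Hamiltonian-form PMP as your black box, your second step becomes a direct citation rather than a technical obstacle. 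Your treatment of the Weierstrass condition and of nontriviality via Gronwall is fine and matches Remark~\ref{RemarkDualArc:ch3}(a).
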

The classical formulation of the above theorem (see, for instance, \cite{MR709590}) is expressed in terms of the ``complete'' Hamiltonian system $(\dot{x},\dot{p}) \in \partial H(x,p)$ (where $\partial H$ stays for  Clarke's generalized gradient of $H$ in $(x,p)$). However, the ``splitting Lemma'' in \cite{SecondoNostro} (Lemma $2.9$) guarantees  that under our assumptions these two formulations are  equivalent. 
\begin{remark}\label{RemarkDualArc:ch3} Let us give two remarks.
\begin{itemize}
\item[(a)]\ Let $(x,p)$ be a solution to the Hamiltonian inclusion  
\begin{equation}\label{CJM}
\left\{\begin{array}{rll}
-\dot{x}(t)&\in& \partial_p^- H (x(t),p(t)),\\
\dot{p}(t)&\in & \partial_x^- H (x(t),p(t)),
\end{array}\right.
\quad \mbox{a.e. in } [t_0,T].
\end{equation}
Then, there are only two possible cases:
\begin{itemize}
\item either $p(t)\neq 0$ for all $t\in [t_0,T]$,
\item or $p(t)=0$ for all $t\in [t_0,T]$.
\end{itemize}
Indeed, consider $r>0$ such that $x([t_0,T]) \subset
B(0,r)$. If we denote by $c_r$ a Lipschitz constant for $F$ on
$B(0,r)$, then $c_r \vert p\vert$ is a Lipschitz constant for
$H(\cdot, p)$ on $B(0,r)$. Thus,
\begin{equation*}
\vert \zeta \vert \leq c_r \vert p \vert \quad \forall \zeta \in
\partial^-_x H(x,p),~ \forall x \in B(0,r),~ \forall p \in \mathbb{R}^n.
\end{equation*}
Hence,  $ \vert
\dot{p}(s)\vert\leq c_r \vert p(s)\vert$ for
a.e. $s \in [t_0,T]$. Therefore, Gronwall's Lemma allows to conclude.
\item[(b)]\ If $(x,p)$ is a solution to \eqref{CJM}, then for any $\lambda >0$ the pair $(x,\lambda p)$ solves \eqref{CJM} as well. Indeed, by the positive 1-homogeneity in $p$ of the Hamiltonian, that is $H(x,\lambda p)=\lambda H(x,p)$ for all $\lambda>0$, $x$, $p\in\mathbb{R}^n$, it follows that $\partial_x H(x,\lambda p)=\lambda \partial_x H(x,p)$ and $\partial_p H(x,\lambda p)=\partial_p H(x,p)$ for all $\lambda>0$, $x$, $p\in\mathbb{R}^n$. Thus, the proof of our claim is an easy verification.
\end{itemize}
\end{remark}
For our aims, sometimes we shall need more refined necessary conditions than the ones in Theorem \ref{NC}. %We recall first the result below (see \cite[Theorem 6.D.4]{L}).
%\begin{theorem}\label{NC+}
%Assume that $(SH)$ and $(H1)$ hold. Suppose $x(\cdot)$ is an optimal solution for the minimum time problem $\mathcal{P}(x)$, reaching the target $\mathcal{K}$ at time $T:=T(x)$, and that there exists a normal vector $\xi$ to $\mathcal{K}$ at $\overline{x}(T)$ such that
%\begin{equation}\label{Petrov}
%\mu(\xi):= H(x(T),\xi)^{-1} > 0 . 
%\end{equation}
%Then there exists an absolute continuous arc $p : [0, T] \rightarrow \mathbb{R}^n$, $p(\cdot)\neq 0$, such that 
%\end{theorem}
%Assuming the Petrov's condition on the target $\mathcal{K}$, that is,
%\begin{description}
%\item[(PC)] $\exists \delta>0\text{  so that  }H(x,\xi)\geq \delta\|\xi\|,\quad\forall x\in\partial \mathcal{K},\,\xi\text{ is an outer normal at }x $
%\end{description}
%guarantees that, given $z \in \partial\mathcal{K}$ and for any outer unit normal $\xi$ to $\mathcal{K}$ at $z$, the inequality in \eqref{Petrov} holds true.\\
Assuming the interior sphere property on the target $\mathcal{K}$ allows to further specify the transversality condition.
\begin{proposition}\label{NC++}
Assume that $(SH)$ and $(H)$ hold. Suppose $x(\cdot)$ is an optimal solution for the minimum time problem $\mathcal{P}(x)$, reaching the target $\mathcal{K}$ at time $T:=T(x)$, and that there exists $0\neq \nu \in N_{\mathcal{K}^C}^P ( x(T))$ realized by a ball of radius $R$, that is,
\[
B\left( x(T) + R \frac{\nu }{\mid \nu \mid},R \right) \subset \mathcal{K}.
\]
Then there exists an absolutely continuous arc $p : [0, T] \rightarrow \mathbb{R}^n$, $p(\cdot)\neq 0$, such that for a.e. $t\in [0,T]$,
\begin{equation}
\left\{\begin{array}{rll}
-\dot{x}(t)&=& \nabla_p H (x(t),p(t)),\\
\dot{p}(t)&\in & \partial_x H (x(t),p(t)),
\end{array}\right.
\quad p(T)= -\nu . 
\end{equation}
\end{proposition}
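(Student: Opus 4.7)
The plan is to reduce the problem to one with a smooth, strictly convex target so that the Clarke normal cone collapses to a single ray in the direction of $-\nu$, and then rescale the resulting dual arc using the positive homogeneity of the Hamiltonian.

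First, I would introduce the shrunk target
\[
\mathcal{K}' := \overline{B\!\left(x(T) + R\frac{\nu}{|\nu|},\, R\right)} \subseteq \mathcal{K}.
\]
Note that $x(T) \in \partial\mathcal{K}'$, since $|x(T) - (x(T) + R\nu/|\nu|)| = R$. Because $x(\cdot)$ reaches $\mathcal{K}$ for the first time at $t = T$ and $\mathcal{K}' \subseteq \mathcal{K}$, the trajectory $x(\cdot)$ cannot enter $\mathcal{K}'$ before time $T$; conversely, since $\mathcal{K}' \subseteq \mathcal{K}$, the minimum time from $x$ to $\mathcal{K}'$ is at least $T(x) = T$. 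Therefore $x(\cdot)$ is itself a minimum time solution for the problem with target $\mathcal{K}'$, with optimal time $T$.

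Next, I would apply Theorem \ref{NC} to this reduced problem. This yields an absolutely continuous arc $p:[0,T]\to\mathbb{R}^n$, $p(\cdot)\neq 0$, satisfying
\[
-\dot x(t) = \nabla_p H(x(t), p(t)), \qquad \dot p(t) \in \partial_x^- H(x(t), p(t)) \subseteq \partial_x H(x(t), p(t))
\]
a.e. on $[0,T]$, together with $p(T) \in N^C_{\mathcal{K}'}(x(T))$. Since $\mathcal{K}'$ is a closed ball with smooth boundary, its Clarke normal cone at the boundary point $x(T)$ is the single ray of outward normals, namely $\{-\lambda\,\nu/|\nu| : \lambda \geq 0\}$, since the outward unit normal to $\mathcal{K}'$ at $x(T)$ is precisely $-\nu/|\nu|$.

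Finally, Remark \ref{RemarkDualArc:ch3}(a) ensures that $p(t) \neq 0$ for every $t \in [0,T]$, so $p(T) = -\lambda\,\nu/|\nu|$ for some $\lambda > 0$. Invoking Remark \ref{RemarkDualArc:ch3}(b), I would replace the pair $(x, p)$ by $(x, (|\nu|/\lambda)\,p)$, which still satisfies the Hamiltonian inclusion and now has endpoint $p(T) = -\nu$, as required.

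The argument is essentially a reduction, and no step presents a serious obstacle; the only point that needs care is verifying that $x(\cdot)$ is optimal for the reduced target $\mathcal{K}'$, which in turn is precisely what the inner ball condition buys us. Once this is in place, the conclusion follows by collapsing the normal cone to a ray and rescaling via the positive $1$-homogeneity of $H$ in $p$.
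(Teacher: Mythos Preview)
Your proposal is correct and follows essentially the same route as the paper's proof: replace $\mathcal{K}$ by the inscribed ball $B_1 = B(x(T)+R\nu/|\nu|,R)$, observe that $x(\cdot)$ remains time-optimal for this smaller target, and apply Theorem~\ref{NC}. Your treatment is in fact more careful than the paper's, which simply asserts $N^C_{B_1}(x(T))=\{-\nu\}$ and invokes Theorem~\ref{NC}; you correctly identify the cone as a ray and handle the rescaling explicitly via Remark~\ref{RemarkDualArc:ch3}.
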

\begin{proof}
The trajectory $x(\cdot)$ is time-optimal even for the problem obtained replacing the target $\mathcal{K}$ by the ball $B_1:=B\left( x(T) + R \nu \mid \nu \mid^{-1},R \right)$. Moreover, $N_{B_1}^C(x(T))=\lbrace - \nu \rbrace$.  Thus, applying Theorem \ref{NC} to this new problem we prove our claim.
\end{proof}

\begin{remark}\label{WithPetrov}
If we suppose in addition that 
\begin{equation}\label{Petrov}
\mu(-\nu):= H(x(T),-\nu)^{-1} > 0, 
\end{equation}
then the above theorem together with Remark \ref{RemarkDualArc:ch3} (b) gives that there exists an absolutely continuous arc $p : [0, T] \rightarrow \mathbb{R}^n$, $p(\cdot)\neq 0$, such that $(x,p)$ solves, for a.e. $t\in [0,T]$,
\begin{equation}
\left\{\begin{array}{rll}
-\dot{x}(t)&=& \nabla_p H (x(t),p(t)),\\
\dot{p}(t)&\in & \partial_x H (x(t),p(t)),
\end{array}\right.
\quad p(T)=-\mu(-\nu)\nu . 
\end{equation} 

\end{remark}

In addition to our  assumptions on $F$ and $H$, further hypotheses on the target set $\mathcal{K}$ might be needed, such as the inner sphere property and the so-called \emph{Petrov condition} we recall below:
\begin{description}
\item[(PC)] $\exists \delta>0$   such that  $H(x,\zeta )\geq \delta\|\zeta\|$  for all  $x\in\partial \mathcal{K}$ and all $\zeta\in N^P_{\mathcal{K}}
(x)$.
\end{description}
Assumption $(PC)$ turns out to be equivalent to the Lipschitz continuity of the minimum time function $T(\cdot)$ in a neighborhood of $\mathcal{K}$. It is also necessary for the semiconcavity of $T(\cdot)$ up to a boundary of $\mathcal{K}$ and equivalent to the validity of a bound of $T$ in terms of the distance function from the target $\mathcal{K}$, which is defined as
\[ d_{\mathcal{K}}:\mathbb{R}^n\rightarrow \mathbb{R}^+,\quad  d_{\mathcal{K}}(x):= \inf \lbrace \mid y-x\mid : y\in \mathcal{K} \rbrace. 
\] 
Recall, among the other things, that assuming  Petrov's condition on the target $\mathcal{K}$ guarantees that \eqref{Petrov} always holds true. For a comprehensive treatment and  further references  on this subject we refer to the book \cite{MR2041617}.
In sections \ref{sec:CP} and \ref{sub:last} we shall also assume  that $\mathcal{K}$ is the closure of its interior and
\begin{description}
\item[(A)] $\partial \mathcal{K}$ is an $(n-1)$-dimensional manifold of class $C^2$.
\end{description}
Whenever $(A)$ holds true, $\mathcal{K}$ satisfies the inner sphere property with a uniform positive radius. Moreover, the signed distance  from the target $\mathcal{K}$, that is,
$$ b_{\mathcal{K}}:\mathbb{R}^n \rightarrow \mathbb{R},\quad b_{\mathcal{K}}(\cdot):= d_{\mathcal{K}}(\cdot)-d_{\mathcal{K}^C}(\cdot),$$ 
is a function of class $C^2$ in a neighborhood of $\partial \mathcal{K}$, and $- \nabla b_{\mathcal{K}}(\xi)$ is a proximal inner normal to $\mathcal{K}$ at $\xi\in\partial \mathcal{K}$ with unit norm.
\section{Conjugate times for the minimum time problem}\label{sec:CP}
The aim of this section is to extend the main result in \cite{MR1344204} to the minimum time problem. More precisely, we show that the absence of conjugate times is equivalent to the propagation of the local regularity of the minimum time function. Let us mention that a partial result in this framework has been recently given in \cite{ frankowska:hal-00981788}. On the other hand, our notion of conjugate time is more in the spirit of \cite{MR1916603} and allows to recover a stronger result than the one in \cite{frankowska:hal-00981788}.
\subsection{Conjugate times for the minimum time problem}
In this section, we assume $(SH)$, $(PC)$, and $(A)$ and suppose that the Hamiltonian $H$ is of class $C^2(\mathbb{R}^n \times (\mathbb{R}^n \setminus \lbrace 0 \rbrace))$. Given $\xi\in\partial \mathcal{K}$, set $g(\xi):=\mu(\xi)\nabla b_{\mathcal{K}}(\xi) $, where $\mu(\xi)$ is the positive constant $H(\xi,\nabla b_{\mathcal{K}}(\xi))^{-1}$. Recall also that, thanks to $(A)$, the function $g$ is of class $C^1$ in a neighborhood of $\partial\mathcal{K}$. Therefore, we denote by $(Y(\xi,\cdot),P(\xi,\cdot))$ (or, briefly, by $(Y(\cdot),P(\cdot))$) the solution of the \emph{backward Hamiltonian system}
\begin{equation}\label{CJ*:ch3}
\left\{\begin{array}{rllrrl}
\dot{Y}(t)&=& \nabla_p H (Y(t),P(t)), & \quad Y(0)&=&\xi, \\
-\dot{P}(t)&=& \nabla_x H (Y(t),P(t)), & \quad P(0)&=&g(\xi).
\end{array}\right.
\end{equation}
We recall that for any $\xi\in\partial\mathcal{K}$ the solution $(Y(\cdot),P(\cdot))$ to \eqref{CJ*:ch3} is well-defined on $[0,+\infty)$ and the functions $Y,~P$ are of class $C^1$ with respect to $\xi$ and the time in $\partial\mathcal{K}\times [0,+\infty)$ (for the proof of these facts see, for instance, Section $3$ in \cite{MR1916603}).\\
Since $\partial\mathcal{K}$ is a $C^2$-manifold of dimension $n-1$, for any $\xi_0\in \partial \mathcal{K}$ there exist a $C^2$ local parameterization of $\partial\mathcal{K}$:
\[ \phi: A\subset \mathbb{R}^{n-1} \rightarrow \mathbb{R}^n, \quad \eta \rightarrow \phi(\eta)=\xi.
\]
Set $\eta_0:=\phi^{-1}(\xi_0)$. Let us denote by $Y_{\xi,t}(\xi,t)$  and $P_{\xi,t}(\xi,t)$ the Jacobians of $Y(\phi(\cdot),\cdot)$ and $P(\phi(\cdot),\cdot)$ with respect to the state variable $\eta\in \mathbb{R}^{n-1}$ and time, that is, 
\[
Y_{\xi,t}(\xi,t)=Y_{\eta,s}(\phi(\eta),t), \quad
P_{\xi,t}(\xi,t)=P_{\eta,s}( \phi(\eta),t).
\]
Therefore, note that $Y_{\xi,t}(\xi,t)$ and $P_{\xi,t}(\xi,t)$ belong to $\mathbb{R}^{n\times n}$ and the pair $(Y_{\xi,t},P_{\xi,t})$ 
solves the \emph{variational system}
\begin{equation}\label{CP_prem:ch3}
\left\{\begin{array}{rllrrl}
\dot{Y}_{\xi,t}&=&H _{xp} (Y,P)Y_{\xi,t}+ H_{pp}(Y,P) P_{\xi,t} ,& Y_{\xi,t}(\xi,0) &=& \left( \frac{\partial \phi}{\partial \eta}(\eta), \nabla_p H(\xi,p)\right), \\
-\dot{P}_{\xi,t}&=& H_{xx}(Y,P) Y_{\xi,t}+ H_{px} (Y,P) P_{\xi,t},&  P_{\xi,t}(\xi,0)&=&\left( \frac{\partial g }{\partial \eta} (\phi(\eta) ),-\nabla_x H(\xi,p)\right) ,
\end{array}\right.
\end{equation}
where we have set $p:=\mu(\xi)\nabla b_{\mathcal{K}} (\xi)$. Matrix $Y_{\xi,t}(\xi,0)$ is invertible; indeed, by $(PC)$ and the choice of $\mu(\cdot)$ it follows that 
\[
0\neq \mu(\xi)^{-1} H(\xi,\mu(\xi)\nabla b_{\mathcal{K}}(\xi) )= \langle \nabla_p H(\xi,\mu(\xi)\nabla b_{\mathcal{K}} (\xi)), \nabla b_{\mathcal{K}} (\xi) \rangle .
\]
Thus, the vector $\mu(\xi)\nabla b_{\mathcal{K}} (\xi)$ is non-characteristic for the data $g(\cdot)$, that is,
\[ \langle \nabla_p H(\xi,\mu(\xi)\nabla b_{\mathcal{K}} (\xi)), \nabla b_{\mathcal{K}} (\xi) \rangle \neq 0. \]
It is natural to introduce the following definition of \textit{conjugate time}.
\begin{definition}\label{conjugate}
Let $\xi_0 \in \partial \mathcal{K}$ and let $\phi$ a local $C^2$ parameterization of $\partial\mathcal{K}$ near $\xi_0$. Let $(Y_{\xi,t},P_{\xi,t})$ be the solution to \eqref{CP_prem:ch3}. Define
\[
\overline{t}=\sup \lbrace t\in [0,+\infty) :\; \det Y_{\xi,t}(\xi_0,s)\neq 0 \;
 \mbox{ for all } s\in [0,t] \rbrace.
 \]
The time $\overline{t}$ is called \emph{conjugate} for $\xi_0$  if $\overline{t}<+\infty$. 
\end{definition}
Thus, if $\overline{t}$ is conjugate for $\xi_0$ then $\det Y_{\xi,t}(\xi_0,\overline{t} )=0$.\\
%To simplify the writing, the words ``along the trajectory $\overline{x}(\cdot)$'' will be omitted whenever the optimal trajectory $\overline{x}$ for $x_0$ is unique. \\
Note that the solution $(Y_{\xi,t},P_{\xi,t})$ to \eqref{CP_prem:ch3} depends on the parameterization $\phi$. On the other hand, the ranks of the values of the maps $Y_{\xi,t}(\xi_0,\cdot)$ and $P_{\xi,t}(\xi_0,\cdot)$ are independent of the particular choice of $\phi$, as well as the above definition of conjugate time.\\
By standard techniques one deduces that if $\det Y_{\xi,t} (\xi_0,t)\neq 0$, then there exists a neighborhood of $(\xi_0,t)$ in $\partial \mathcal{K}\times \mathbb{R}$ such that the matrix  $Y_{\xi,t} (\xi,s)$ is nonsingular for any vector $(\xi,s)$ in such a neighborhood. Furthermore, if there are no conjugate times for $\xi_0$ on some interval $[0,a]$, then the map $Y(\cdot,\cdot)$ provides a diffeomorphism from a neighborhood $ J_{\xi_0} \times U_t$ of $(\xi_0,t)$ in $\partial \mathcal{K}\times \mathbb{R}$ onto its image for all $t\in [0,a]$. %Note that $Y(I_{\xi_0},U_t)$ is a neighborhood of $\overline{x}(T(x_0)-t)$. 
Moreover, it is easy to check that the function $R(\xi_0,t):=P_{\xi,t}(\xi_0,t)Y^{-1}_{\xi,t}(\xi_0,t)$, as long as $Y_{\xi,t}(\xi_0,t)$ is invertible, solves the Riccati equation
\begin{equation}\label{Riccati}
\left\lbrace\begin{array}{l}
\dot{R}+ H_{px}(Y,P) R + R H_{xp}(Y,P)+R H_{pp}(Y,P) R+ H_{xx}(Y,P)=0,\\
R(\xi_0, 0)= P_{\xi,t} (\xi_0,0)Y^{-1}_{\xi,t}(\xi_0,0) .
\end{array}
\right.
\end{equation}
For a fixed $\theta\in\mathbb{R}^n \setminus \lbrace 0 \rbrace$ and for any $t>0$, let us denote by $w(t)$ the $2 n$-vector given by $(Y_{\xi,t}(\xi_0,t )\theta,P_{\xi,t}(\xi_0,t )\theta)$. It is easy to check that $w(\cdot)$ solves a linear differential system with nonzero initial data, since $Y_{\xi,t}(\xi_0,0)$ has rank $n$. By well-known properties of linear systems, it follows that $w(t)\neq 0$ for all $t>0$. This means that  for any $\theta \in\mathbb{R}^n\smallsetminus \lbrace 0 \rbrace$ and $t>0$,
$$ Y_{\xi,t}(\xi_0,t )\theta =0\; \Rightarrow\; P_{\xi,t}(\xi_0,t )\theta \neq 0.$$ 
Therefore, it is easy to understand that a time $\overline{t}$ is conjugate for $\xi_0$ if and only if $[0,\overline{t})$ is the maximal interval of existence of the solution $R(\xi_0,\cdot)$  to \eqref{Riccati} and $\overline{t}<+\infty$. Thus, $\overline{t}$ is a finite blow-up time for $R(\xi_0,\cdot)$, that is,
\[
\lim_{t\nearrow \overline{t}} \parallel R (\xi_0,t)\parallel = +\infty.
\]
If $(Y, P)$ is given on a finite time interval $[0,T]$, then the above definition of conjugate time can be adapted, by saying that $\overline{t} \in [0,T]$ is a conjugate time for
$\xi_0$ if and only if $\det Y_{\xi,t}(\xi_0,t)\neq 0$ for all  $t \in [0,\overline{t})$ and  $\det Y_{\xi,t}(\xi_0,\overline{t})=0$. Equivalently, $\overline{t}$ is a conjugate time for
$\xi_0$ if and only if $R_{\xi,t}(\xi_0,\cdot)$ is well defined on $[0,\overline{t})$ and $\lim_{t
\nearrow \overline{t}} \parallel R_{\xi,t}(\xi_0,t) \parallel = + \infty$. 

%This yields that the sequence $\lbrace(\xi_j, \tau_j)\rbrace$ lies into a ball of $(\xi,a)$. Without lost of generality, we can suppose that $\lbrace(\xi_j, \tau_j)\rbrace\subset U_a\times V_{\xi_0}$, where $U_a\times V_{\xi_0}$ is as . Furthermore, by passing in case to a subsequence, we can suppose that $\xi_j$ converges to some $\overline{\xi}\in \partial \mathcal{K}$. 
%Aiming for a contradiction, suppose that $\overline{xi}\neq \xi$ or $\overline{\tau}\neq a$. Then, the characteristic $Y(\overline{\xi},\cdot)$ is optimal for $\overline{x}$, since it is the uniform limit of optimal trajectories. The optimal time for $\overline{x}$ is $\overline{\tau}\in U_a$. Note that these facts contradicts that. Finally, we have proved that $\xi_j$ converges to $\xi$ and $\tau_j$ to $a$, respectively, as $k\rightarrow \infty$. This show that the characteristic $Y(\xi,\cdot)$ is optimal on $[0,a]$.

\subsection{Local regularity of the minimum time function and conjugate times}
Let $\xi_0\in\partial \mathcal{K}$ and $t\geq 0$. Given an open neighborhood $V_{\xi_0}\times I_t$ of $(\xi_0,t)$ in $\partial \mathcal{K} \times\mathbb{R}$, define the set
$$M (V_{\xi_0}\times U_t):= \lbrace (Y(\xi,s),P(\xi,s)) |\ (Y,P) \mbox{ solves } \eqref{CJ*:ch3} \mbox{ with } \xi \in V_{\xi_0},\ s\in U_t \rbrace. $$
When $t> 0$, the set $U_t$ may be viewed as an interval of the form $(t-b,t+b)$ for some $b>0$ and when $t=0$ as the interval $(0,b)$.
\begin{theorem}\label{TheoremCT}
Let us assume $(SH)$, $(PC)$ and $(A)$ and suppose that the Hamiltonian $H$ is of class $C^2(\mathbb{R}^n \times (\mathbb{R}^n \setminus \lbrace 0 \rbrace))$. Fix $\overline{t}>0$. Then, the following two statements are equivalent:
\begin{itemize}
\item[(i)]  for all $t\in[0,\overline{t}]$, there exists an open neighborhood $V_{\xi_0}\times I_t$ of $(\xi_0,t)$ in $\partial \mathcal{K} \times\mathbb{R}$ such that the set
\begin{equation}\label{domain}
\mathcal{D}(V_{\xi_0}\times I_t):= \lbrace Y(\xi,s)| (Y,P) \mbox{ solves } \eqref{CJ*:ch3}, \mbox{ with } \xi \in V_{\xi_0},\ s\in I_t \rbrace 
\end{equation}
is an open subset of $\mathbb{R}^n$, and $M(V_{\xi_0}\times I_t)$ is the graph of a $C^1$ function on $\mathcal{D}(V_{\xi_0}\times U_t)$;
\item[(ii)]\  there are no conjugate times for $\xi_0$ on $[0,\overline{t}]$.
\end{itemize}
\end{theorem}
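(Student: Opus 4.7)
The statement is essentially a local-diffeomorphism assertion for the exponential-type map $(\xi,s)\mapsto Y(\xi,s)$ from $\partial\mathcal{K}\times\mathbb{R}$ (an $n$-dimensional manifold) to $\mathbb{R}^n$, whose full Jacobian at $(\xi_0,t)$ is precisely the $n\times n$ matrix $Y_{\xi,t}(\xi_0,t)$. My plan is to derive (i) from (ii) via the inverse function theorem and to derive (ii) from (i) via a chain-rule contradiction that exploits the nonvanishing property of $(Y_{\xi,t}\theta,P_{\xi,t}\theta)$ recorded just after Definition \ref{conjugate}.

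\textbf{Setup.} Fix a $C^2$ local parametrization $\phi:A\to\partial\mathcal{K}$ with $\phi(\eta_0)=\xi_0$, and put $\Psi(\eta,s):=Y(\phi(\eta),s)$, $\Theta(\eta,s):=P(\phi(\eta),s)$. Then $D\Psi(\eta_0,t)$ and $D\Theta(\eta_0,t)$ coincide, as $n\times n$ matrices, with $Y_{\xi,t}(\xi_0,t)$ and $P_{\xi,t}(\xi_0,t)$ respectively.

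\textbf{(ii)$\Rightarrow$(i).} Fix $t\in[0,\overline{t}]$. Since $t$ is not conjugate, $\det D\Psi(\eta_0,t)\neq 0$, so the inverse function theorem provides a neighborhood $W\times U_t\ni(\eta_0,t)$ on which $\Psi$ is a $C^1$ diffeomorphism onto its image. Setting $V_{\xi_0}:=\phi(W)$ and $I_t:=U_t$, the set $\mathcal{D}(V_{\xi_0}\times I_t)=\Psi(W\times U_t)$ is open, and $\Pi:=\Theta\circ\Psi^{-1}$ is $C^1$ on $\mathcal{D}(V_{\xi_0}\times I_t)$ with graph equal to $M(V_{\xi_0}\times I_t)$.

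\textbf{(i)$\Rightarrow$(ii).} Argue by contradiction: suppose some $t^*\in[0,\overline{t}]$ is conjugate for $\xi_0$. Then $\det Y_{\xi,t}(\xi_0,t^*)=0$, so there exists $\theta_0\in\mathbb{R}^n\setminus\{0\}$ with $D\Psi(\eta_0,t^*)\theta_0=0$, while by the observation following Definition \ref{conjugate} we still have $D\Theta(\eta_0,t^*)\theta_0\neq 0$. Applying (i) at $t^*$, there is a $C^1$ map $\Pi$ on $\mathcal{D}(V_{\xi_0}\times I_{t^*})$ whose graph equals $M(V_{\xi_0}\times I_{t^*})$. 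Since $(\Psi(\eta,s),\Theta(\eta,s))\in M(V_{\xi_0}\times I_{t^*})$ for every $(\eta,s)$ with $\phi(\eta)\in V_{\xi_0}$ and $s\in I_{t^*}$, the graph condition forces the pointwise identity $\Theta(\eta,s)=\Pi(\Psi(\eta,s))$ on a neighborhood of $(\eta_0,t^*)$. Differentiating at $(\eta_0,t^*)$ and applying to $\theta_0$ yields
$$D\Theta(\eta_0,t^*)\,\theta_0=D\Pi(\Psi(\eta_0,t^*))\,D\Psi(\eta_0,t^*)\,\theta_0=0,$$
contradicting $D\Theta(\eta_0,t^*)\theta_0\neq 0$.

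\textbf{Main obstacle.} The (ii)$\Rightarrow$(i) direction is essentially the inverse function theorem, and no compactness/patching over $[0,\overline{t}]$ is needed because the conclusion is pointwise in $t$. The delicate ingredient in (i)$\Rightarrow$(ii) is upgrading the graph property of $M$ over $\mathcal{D}$ to the pointwise composition relation $\Theta=\Pi\circ\Psi$ near $(\eta_0,t^*)$; this relies on the uniqueness of $p$ with $(x,p)\in M$ for each $x\in\mathcal{D}$ (i.e., the graph hypothesis itself) together with the fact that $\Psi(\eta,s)\in\mathcal{D}(V_{\xi_0}\times I_{t^*})$ by definition of $\mathcal{D}$. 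Once that identity is established, the chain-rule computation closes the argument, and the whole proof rests on the single invariant observation that a kernel direction of $D\Psi$ can never lie in the kernel of $D\Theta$.
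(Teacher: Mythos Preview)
Your proof is correct, and in fact more complete than the paper's: the paper only proves (ii)$\Rightarrow$(i), explicitly deferring the converse.

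For (ii)$\Rightarrow$(i), your route is genuinely different and more direct. The paper argues by contradiction through a compactness/blow-up scheme: fixing a neighborhood $V_{\xi_0}$ on whose closure $\det Y_{\xi,t}$ stays nonzero, it assumes the Lipschitz-graph property fails at some first time $a$, extracts sequences $t_i\nearrow a$, $x_i$ with $\|D\Phi_{t_i}(x_i)\|\to\infty$, linearizes, and passes to the limit to produce a point $(\overline\xi,\overline s)$ in the closure where $\det Y_{\xi,t}=0$, contradicting the choice of $V_{\xi_0}$. This machinery yields a \emph{uniform} Lipschitz bound for $\Phi_t$ over $[0,\overline t]$ on a fixed neighborhood, which is more than statement~(i) actually asks for (the statement allows $V_{\xi_0}\times I_t$ to depend on $t$). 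Your observation that, at each fixed $t$, the single nondegeneracy $\det D\Psi(\eta_0,t)\neq 0$ already triggers the inverse function theorem is exactly the ``standard techniques'' the paper alludes to in the paragraph preceding the theorem, and it suffices for (i) as stated. The paper's argument buys a uniformity that could matter if one needed a single tubular neighborhood of the whole trajectory with a controlled constant; yours buys brevity.

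For (i)$\Rightarrow$(ii), your chain-rule contradiction is the natural argument, and it hinges precisely on the property recorded after Definition~\ref{conjugate} that $Y_{\xi,t}(\xi_0,t)\theta=0$ forces $P_{\xi,t}(\xi_0,t)\theta\neq 0$. The step you flag as the ``main obstacle''---upgrading the graph hypothesis to the pointwise identity $\Theta=\Pi\circ\Psi$---is indeed immediate from the definition of $M$ and of a graph, so there is no gap there.
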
 
In \cite{MR1344204}, Caroff and Frankowska analysed the link between conjugate points and regularity of the value function $V$ for Bolza optimal control problems, showing that the first emergence of a conjugate point corresponds to the first time when $V$ stops to be locally smooth along optimal trajectories. In Theorem \ref{TheoremCT} we prove that the same kind of result holds true also for the minimum time problem. We note that our result cannot be deduced from the one in \cite{MR1344204}---even though the technique of proof is similar---because the definition of conjugate time we use in this paper is different from the one therein. For this reason, we give below the proof of the implication $(ii) \Rightarrow (i)$, which is the one needed to derive Theorem~\ref{TheoLocalReg:ch3} below. 
\begin{proof}
Suppose that there are no conjugate times for $\xi_0$ on $[0,\overline{t}]$. We want to show that there exists a neighborhood $V_{\xi_0}\times I_t$ of $(\xi_0,t)$ in $\partial \mathcal{K} \times\mathbb{R}$ such that $M (V_{\xi_0}\times U_t)$ is a graph of a $C^1$ function on $\mathcal{D}(V_{\xi_0}\times I_t)$, for all $t\in[0,\overline{t}]$. Actually, we shall prove, first, that $M (V_{\xi_0}\times U_t)$ is a graph of a Lipschitz function with Lipschitz constant uniform in $[0,\overline{t}]$. So, proceeding by contradiction, let us fix any neighborhood $V_{\xi_0}\times I_t$ of $(\xi_0,t)$ in $\partial \mathcal{K} \times\mathbb{R}$ and 
%First of all, let us fix  Moreover, let $V_{\xi_0}\subset I_{\xi_0}$ be an open set such that any vector $\xi\in \overline{V_{\xi_0}}$ does not have conjugate times on $[0,\overline{t}]$. 
let us consider the compact set $\Pi_{t}:= \overline{M (V_{\xi_0}\times I_t)}$ for all $t> 0$. 
It is a well-known fact that there exists a time $t^*>0$ such that $\Pi_t$ is a graph of a Lipschitz function for all $t\in [0,t^*]$. %We want to show, first, that $\Pi_t$ is a graph of a Lipschitz function for all $t\in [0,\overline{t}]$. 
Let $a =\sup \mathcal{T}$, where
\begin{align*}
\mathcal{T}:= \lbrace &t\in [0,\overline{t}] :\ \exists k_t \geq 0 \mbox{ s.t. } \Pi_s \mbox{ is a graph of a } k_t \mbox{-Lipschitz function}\\
&\Phi_s:\overline{\mathcal{D} ( V_{\xi_0}\times I_s)}\rightarrow \mathbb{R}^n \mbox{ for all } s\in [0,t] \rbrace. 
\end{align*}
Aiming to a contradiction, suppose that $a\not\in \mathcal{T}$, i.e., $\Pi_a$ is not the graph of a $k$-Lipschitz function. Then, fix $t\in[0,a) $. Since $\det Y_{\xi,t}(\xi_0,t)\neq 0$, without loss of generality, we can suppose that for any vector $(\xi,s) \in \overline{V_{\xi_0}}\times \overline{U_t}$ we have that $\det Y_{\xi,t}(\xi,s)\neq 0$. Moreover, $Y(\cdot,\cdot): V_{\xi_0}\times U_t \rightarrow \mathbb{R}^n $ is an injective continuous map. Thus, $Y(V_{\xi_0}, U_t)$ is an open set by Brouwer's invariance of domain theorem. Note that $\mathcal{D}(V_{\xi_0}\times U_t)\equiv Y(V_{\xi_0}, U_t)$. Consequently, $\mathcal{D} (V_{\xi_0}\times U_t)$ is open and its closure is
\[ \overline{\mathcal{D}(V_{\xi_0}\times U_t)}:= \lbrace Y(\xi,s)| (Y,P) \mbox{ solves } \eqref{CJ*:ch3} \mbox{ with } \xi\in \overline{ V_{\xi_0}},\ s\in \overline{U_t}\rbrace. \]
Note that the map $\Phi_t$ is a.e. differentiable on $\mathcal{D}(V_{\xi_0}\times U_t)$ for all $t\in [0,a)$.
Since $\Pi_a$ is not a Lipschitz graph, there exist two sequences $ t_i\nearrow a$ and $ \lbrace x_i\rbrace_{i\in\mathbb{N}} \subset \mathcal{D} (V_{\xi_0}\times U_{t_i})$ such that 
\[ \parallel D \Phi_{t_i}(x_i) \parallel\rightarrow + \infty. \]
Equivalently, we can find a sequence of vectors $\lbrace u_i,v_i\rbrace_{i\in\mathbb{N}}\subset \mathbb{R}^n \times \mathbb{R}^n$ such that $D \Phi_{t_i}(x_i) u_i = v_i$, $|v_i|=1$ for all $i\in\mathbb{N}$ and $|u_i|\rightarrow 0$ as $i\rightarrow + \infty$.\\
Since  $ \lbrace x_i\rbrace_{i\in\mathbb{N}} \subset \mathcal{D} (V_{\xi_0}\times U_{t_i})$, there exist a sequence of vectors $\lbrace \xi_i\rbrace_i\subset V_{\xi_0} $ and one of times $s_i\in U_{t_i}$ such that the solution $(Y(\xi_i,\cdot),P(\xi_i,\cdot))$ to \eqref{CJ*:ch3} solves $Y(\xi_i,s_i)=x_i$ and $P(\xi_i,s_i)=\Phi_{t_i}( x_i)$. Now, let us consider the linearization of the system associated to $(Y(\xi_i,\cdot),P(\xi_i,\cdot))$ at $(x_i,p_i)$ given by the solution $(w_i,q_i)$ to 
\begin{equation}
\left\{\begin{array}{rllrrl}
\dot{w_i}&=&H _{xp} (Y(\xi_i,t),P(\xi_i,t))w_i + H_{pp}(Y(\xi_i,t),P(\xi_i,t))q_i ,& w_i(s_i) &=& u_i, \\
-\dot{q_i}&=& H_{xx}(Y(\xi_i,t),P(\xi_i,t)) w_i+ H_{px} (Y(\xi_i,t),P(\xi_i,t)) q_i,&  q_i(s_i)&=&v_i.
\end{array}\right.
\end{equation}
Consequently, 
\begin{equation}\label{blow}
D \Phi_{t_i} (x_i) w_i(s_i) = q_i(s_i), \;\; |w_i(s_i)| \rightarrow 0 \mbox{ and } |q_i(s_i)|=1. 
\end{equation}
After possibly passing to a subsequence, we may assume that the sequence $\lbrace \xi_i\rbrace_{i\in\mathbb{N}}$ converges to some vector $\overline{\xi}\in \overline{V_{\xi_0}}$ and $\lbrace s_i\rbrace_{i\in\mathbb{N}}$ to some time $\overline{s}\in \overline{U_t}$, as $i\rightarrow +\infty$. Then, passing to the limit as $i\rightarrow +\infty$, it is easy to deduce from \eqref{blow} that the vector $\overline{\xi}$ has a conjugate time equal to $\overline{s}$, i.e., $\det Y_{\xi,t}(\overline{\xi},\overline{s})=0$. Since $(\overline{\xi},\overline{s})\in \overline{ V_{\xi_0}}\times \overline{U_t}$, we obtain a contradiction. Therefore, $\Pi_t$ is a graph of a Lipschitz function for all $t\in [0,\overline{t}]$. Since $g$ is of class $C^1$ in a neighborhood of $\partial\mathcal{K}$, by well-known properties of linearized systems we deduce that, for every parameterization $\phi$ of $V_{\xi_0}$, $\Phi_t \circ \phi^{-1}$ is of class $C^1$ and (i) holds true.
\end{proof}
\begin{remark}\label{RemarkRapresentation}
Suppose that the map $\Phi_t$ is of class $C^1$ on the set $\mathcal{D}(V_{\xi_0}\times U_t)$ for all $t\in[0,\overline{t}]$. Then, it is easy to understand that its Jacobian is given by: for all $\xi\in V_{\xi_0}$ and $s\in U_t$,
\[ D \Phi_t(Y(\xi,s))= P_{\xi,t}(\xi,s) Y_{\xi,t}(\xi,s)^{-1}, \]
in the sense that the matrix 
\[ P_{\eta,t}(\phi(\eta),s) Y_{\eta,t}(\phi(\eta),s)^{-1} \]
represents the Jacobian of $\Phi_t$ at $Y(\xi,s)$ in the system of local coordinates $(\eta_1,...\eta_{n-1})$ induced by a parameterization $\phi$ of $V_{\xi_0}$.
\end{remark}
A characteristic $Y(\xi,\cdot)$, with $\xi\in\partial\mathcal{K}$, is said to be \emph{optimal} in some interval $[0,\tau]$ if it coincides with an optimal trajectory $y(\cdot)$ starting from $Y(\xi,\tau)$ running backward in time, that is, $Y(\xi,t)=y(\tau-t)$, for all $t\in [0,\tau]$. By the classical method of characteristics, one can deduce that any characteristic $Y(\xi,\cdot)$ is optimal in $[0,\tau^*)$ for some time $\tau^*>0$. Theorem \ref{TheoremCT} allows to deduce that this result holds true as long as there are no conjugate times. 
\begin{corollary}\label{CorollaryOptimality}
Let us assume $(A)$ and suppose that $H$ is of class $C^{2}(\mathbb{R}^n \times (\mathbb{R}^n \times \lbrace 0 \rbrace))$. If there are no conjugate times for $\xi_0$ on the interval $[0,\overline{t}]$, then there exists a neighborhood of $\xi_0$ in $\partial \mathcal{K}$, $V_{\xi_0}$, such that $Y(\xi,\cdot)$ is optimal on $[0,\overline{t}]$ for any $\xi\in V_{\xi_0}$. 
\end{corollary}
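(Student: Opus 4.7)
The plan is to use Theorem~\ref{TheoremCT} to build, on a tubular neighborhood of the reference characteristic, a local $C^1$ classical solution of the Hamilton--Jacobi equation $H(x,\nabla u)=1$ with $u=0$ on $\partial\mathcal{K}$, and then run a verification argument to identify $u$ with the minimum time in that neighborhood.

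First I would fix the compact curve $\Gamma:=\{Y(\xi_0,s)\,:\,s\in[0,\overline t\,]\}$. For each $t\in[0,\overline t\,]$, Theorem~\ref{TheoremCT}(i) provides a neighborhood $V^{t}\times I_{t}$ of $(\xi_0,t)$ in $\partial\mathcal{K}\times\mathbb{R}$ and a $C^{1}$ map $\Phi_{t}$ on the open set $\mathcal{D}(V^{t}\times I_{t})$ whose graph coincides with $M(V^{t}\times I_{t})$. Extracting a finite subcover $\{I_{t_i}\}$ of $[0,\overline t\,]$ and setting $V_{\xi_0}:=\bigcap_i V^{t_i}$ (possibly shrunk further to guarantee injectivity of $Y(\cdot,\cdot)$ on $\overline{V_{\xi_0}}\times[0,\overline t\,]$ via Brouwer's invariance of domain, exactly as in the proof of Theorem~\ref{TheoremCT}), the maps $\Phi_{t_i}$ agree on overlaps because they are all determined by the same backward Hamiltonian flow~\eqref{CJ*:ch3}; hence they glue to a single $C^{1}$ map $\Phi$ on an open neighborhood $\mathcal{U}$ of $\Gamma$ satisfying $\Phi(Y(\xi,s))=P(\xi,s)$ for every $\xi\in V_{\xi_0}$, $s\in[0,\overline t\,]$.

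Next I would define $u:\mathcal{U}\to\mathbb{R}$ by $u(Y(\xi,s)):=s$. Injectivity of $Y$ makes this well-defined, and differentiating $u(Y(\xi,s))=s$ together with Remark~\ref{RemarkRapresentation} and the identity $Y_{s}=\nabla_{p}H(Y,P)$ yields $\nabla u=\Phi$, so $u\in C^{1}(\mathcal{U})$ and $u\equiv 0$ on $\partial\mathcal{K}\cap\mathcal{U}$. Since $H$ is a first integral of the autonomous system~\eqref{CJ*:ch3}, $s\mapsto H(Y(\xi,s),P(\xi,s))$ is constant along each characteristic; its value at $s=0$ is $H(\xi,\mu(\xi)\nabla b_{\mathcal{K}}(\xi))=\mu(\xi)\,H(\xi,\nabla b_{\mathcal{K}}(\xi))=1$ by definition of $\mu$, so $u$ solves the eikonal equation $H(x,\nabla u(x))=1$ on $\mathcal{U}$.

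Finally, for any $\xi\in V_{\xi_0}$ I would prove that the characteristic $Y(\xi,\cdot)$ is optimal on $[0,\overline t\,]$ by a standard verification. Set $x:=Y(\xi,\overline t\,)$ and let $z:[0,T]\to\mathbb{R}^{n}$ be an admissible arc with $z(0)=x$, $z(T)\in\mathcal{K}$. As long as $z$ remains in $\mathcal{U}$, the definition of $H$ gives
\begin{equation*}
\frac{d}{ds}u(z(s))=\langle\nabla u(z(s)),\dot z(s)\rangle\ge-H(z(s),\nabla u(z(s)))=-1,
\end{equation*}
whence $T\ge u(x)-u(z(T))=\overline t$; moreover the characteristic $Y(\xi,\overline t-\cdot)$ attains equality because it saturates $\dot y=-\nabla_{p}H(y,\nabla u(y))$, which is an admissible velocity in $F(y)$. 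The main obstacle is to rule out competitors that leave $\mathcal{U}$ before reaching $\mathcal{K}$; to handle this I would argue by a bootstrap on the supremum $\tau^{*}(\xi)$ of times for which $Y(\xi,\cdot)$ is optimal. The classical method of characteristics ensures $\tau^{*}(\xi)>0$, and if $\tau^{*}(\xi)\le\overline t$ a limiting argument produces, at $s=\tau^{*}(\xi)$, another extremal of~\eqref{CJ*:ch3} issuing from some $\xi'\in\overline{V_{\xi_0}}$ and meeting $Y(\xi,\cdot)$ with the same terminal covector; this forces $\det Y_{\xi,t}(\xi',\tau^{*}(\xi))=0$, contradicting the absence of conjugate times on $[0,\overline t\,]$ guaranteed by Theorem~\ref{TheoremCT} for every $\xi'$ sufficiently close to $\xi_0$. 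Hence $\tau^{*}(\xi)>\overline t$ on a possibly smaller neighborhood $V_{\xi_0}$, which completes the proof.
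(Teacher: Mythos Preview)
The paper gives no proof of this corollary beyond the remark preceding it that the result follows from Theorem~\ref{TheoremCT} together with the classical method of characteristics, so your overall strategy---build a local $C^{1}$ solution $u$ of $H(x,\nabla u)=1$ via Theorem~\ref{TheoremCT} and run a verification argument---is in the intended spirit, and your construction of $u$ with $\nabla u=\Phi$ and $H(\cdot,\nabla u)\equiv 1$ is correct.

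The gap is the bootstrap in your last paragraph. You claim that if $\tau^{*}(\xi)\le\overline t$, a limit of competing optimal trajectories yields a second extremal of~\eqref{CJ*:ch3} that (a) emanates from some $\xi'\in\overline{V_{\xi_0}}$ and (b) meets $Y(\xi,\cdot)$ at $s=\tau^{*}(\xi)$ with the same covector, hence forces $\det Y_{\xi,t}(\xi',\tau^{*}(\xi))=0$. None of this is justified: competing optimal arcs from $Y(\xi,s_k)$, $s_k\searrow\tau^{*}(\xi)$, may hit $\partial\mathcal{K}$ far from $\xi_0$; if two Hamiltonian extremals really shared a point \emph{and} covector they would coincide by ODE uniqueness; and two \emph{distinct} extremals merely meeting at a point does not, by itself, make the linearized map degenerate there. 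The standard way to close the argument avoids manufacturing a conjugate point: for small $\delta>0$ the level set $\{T=\tau^{*}(\xi)-\delta\}$ is a $C^{2}$ hypersurface near $Y(\xi,\tau^{*}(\xi)-\delta)$ because $T=u$ there; treat it as a new target, apply the short-time optimality of characteristics that you already grant, and use dynamic programming $T=T'+(\tau^{*}(\xi)-\delta)$ to push optimality strictly past $\tau^{*}(\xi)$, contradicting the definition of $\tau^{*}(\xi)$.
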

\subsection{A characterization of conjugate times}
In this subsection, let us assume $(SH)$, $(PC)$ and $(A)$ and suppose that the Hamiltonian $H$ is of class $C^2(\mathbb{R}^n \times (\mathbb{R}^n \setminus \lbrace 0 \rbrace))$. Let us denote by $Y_{\xi}(\xi,t)$ and $P_{\xi}(\xi,t)$ the Jacobian of $Y(\phi(\cdot),t)$ and $P(\phi(\cdot) ,t)$ with respect to the state variable $\eta\in\mathbb{R}^{n-1}$ evaluated at $(\xi,t)$, that is, 
\[ 
Y_{\xi}(\xi,t)= Y_{\eta} (\phi(\eta),t), \quad 
P_{\xi}(\xi,t)= P_{\eta} (\phi(\eta),t).
\]
One can easily check that the pair $(Y_{\xi}(\xi,\cdot),\ P_{\xi}(\xi,\cdot))$ takes values in $\mathbb{R}^{n \times (n-1)}\times \mathbb{R}^{n \times (n-1)}$ and solves the system
\begin{equation}\label{CPpartial}
\left\{\begin{array}{rllrrl}
\dot{Y}_{\xi}&=&H _{xp} (Y,P)Y_{\xi}+ H_{pp}(Y,P) P_{\xi} ,& Y_{\xi}(\xi,0) &=& \frac{ \partial \phi }{\partial \eta } (\eta), \\
-\dot{P}_{\xi}&=& H_{xx}(Y,P) Y_{\xi}+ H_{px} (Y,P) P_{\xi},&  P_{\xi}(\xi, 0)&=&\frac{\partial g}{ \partial \eta}(\phi( \eta)).
\end{array}\right.
\end{equation}
In the case of a strictly convex Hamiltonian in $p$, the notion of conjugate time can be characterized through the solution of the above system (see \cite[Theorem~6.1]{MR1916603}). Let us now introduce the hypothesis:
\begin{description}
\item[(H2)] The kernel of $H_{pp}(x,p)$ has dimension equal to $1$ for every $(x,p)\in \mathbb{R}^n \times ( \mathbb{R}^n \smallsetminus \lbrace 0 \rbrace ) $, i.e., $\ker H_{pp}(x,p)= p~ \mathbb{R}$.
\end{description} 
We will show that also under the weaker assumption $(H2)$ a similar characterization can be provided.
\begin{proposition}\label{Cristina}
Suppose that $H$ satisfies $(H2)$. 
For any $t>0 $, it holds that $\det Y_{\xi,t}(\xi, t )=0$ if and only if $rk Y_{\xi}(\xi, t ) < n-1$. 
\end{proposition}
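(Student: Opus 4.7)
The plan is to exploit the pointwise identity $P(s)^{\top} Y_{\xi}(\xi,s) \equiv 0$ (valid for every $s \ge 0$), combined with Euler's relation $\langle \nabla_p H(Y,P), P\rangle = H(Y,P)$ and the fact that $H(Y(\xi,s),P(\xi,s)) \equiv 1$.

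\textbf{Setup and reduction.} The matrix $Y_{\xi,t}(\xi,t)$ is the block $\bigl[\,Y_\xi(\xi,t)\ \big|\ \nabla_p H(Y(\xi,t),P(\xi,t))\,\bigr]$, since the last column is $\partial_s Y(\xi,t) = \nabla_p H(Y,P)$. The implication ``$\mathrm{rk}\,Y_\xi(\xi,t) < n-1 \Rightarrow \det Y_{\xi,t}(\xi,t) = 0$'' is immediate: any nonzero $u \in \ker Y_\xi(\xi,t)$ gives $Y_{\xi,t}(\xi,t)(u,0)^{\top} = 0$. For the converse, I fix a nontrivial linear dependence $Y_\xi(\xi,t)\, u + \beta\, \nabla_p H(Y(t),P(t)) = 0$ with $(u,\beta) \neq 0$, and aim to show $\beta = 0$, so that $Y_\xi(\xi,t) u = 0$ with $u \neq 0$, forcing $\mathrm{rk}\,Y_\xi(\xi,t) < n-1$.

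\textbf{The conservation law $\langle P(s), Y_\xi(\xi,s) u\rangle \equiv 0$.} At $s=0$, $P(\xi,0) = g(\xi) = \mu(\xi)\nabla b_{\mathcal{K}}(\xi)$ is normal to $\partial\mathcal{K}$ at $\xi$, while every column of $Y_\xi(\xi,0) = \partial \phi/\partial \eta$ is tangent to $\partial \mathcal{K}$; hence the inner product vanishes. To propagate this, I differentiate and substitute the variational system \eqref{CPpartial}:
\begin{align*}
\tfrac{d}{ds}\langle P, Y_\xi u\rangle &= \langle \dot P, Y_\xi u\rangle + \langle P,\, H_{xp}(Y,P) Y_\xi u + H_{pp}(Y,P) P_\xi u\rangle\\
&= -\langle H_x, Y_\xi u\rangle + \langle H_{xp}^{\top} P,\, Y_\xi u\rangle + \langle H_{pp}(Y,P) P,\, P_\xi u\rangle.
\end{align*}
Positive $1$-homogeneity of $H$ in $p$ gives the two algebraic facts I need (both hold irrespective of whether the kernel in $(H2)$ is exactly one-dimensional): differentiating $H_p(x,\lambda p) = H_p(x,p)$ in $\lambda$ yields $H_{pp}(Y,P)P = 0$, and differentiating $H_x(x,\lambda p) = \lambda H_x(x,p)$ in $\lambda$ yields (noting that $H_{xp}^{\top}$ in the paper's convention equals $H_{px}$) the identity $H_{xp}^{\top} P = H_x$. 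Substituting both cancellations into the display gives $\tfrac{d}{ds}\langle P, Y_\xi u\rangle = 0$, so $\langle P(s), Y_\xi(\xi,s) u\rangle \equiv 0$ for every $u\in\mathbb{R}^{n-1}$.

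\textbf{Conclusion via Euler's identity.} The Hamiltonian is conserved along the flow, and by the definition of $\mu$,
\[
H(Y(\xi,s),P(\xi,s)) \equiv H(\xi,g(\xi)) = \mu(\xi)\,H(\xi,\nabla b_{\mathcal{K}}(\xi)) = 1.
\]
By Euler's formula applied to $H(Y(t),\cdot)$ (legitimate since $P(t) \neq 0$ by Remark \ref{RemarkDualArc:ch3}), $\langle \nabla_p H(Y(t),P(t)), P(t)\rangle = H(Y(t),P(t)) = 1$. Taking the inner product of the dependence relation with $P(t)$,
\[
0 = \langle P(t),\, Y_\xi(\xi,t) u\rangle + \beta\,\langle P(t),\, \nabla_p H(Y(t),P(t))\rangle = 0 + \beta\cdot 1,
\]
so $\beta = 0$. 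Then $Y_\xi(\xi,t) u = 0$ and, by nontriviality of $(u,\beta)$, $u \neq 0$, proving $\mathrm{rk}\,Y_\xi(\xi,t) < n-1$.

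\textbf{Main difficulty.} The only delicate point is a careful bookkeeping of the Hessian conventions in \eqref{CPpartial} so as to identify $H_{xp}^{\top}$ with $H_{px}$ and to correctly read off $H_{px}(Y,P)P = H_x(Y,P)$ from $1$-homogeneity; once that is in place, the rest reduces to a short verification. Notice that the proposition's hypothesis $(H2)$ does not explicitly enter this argument — only the positive $1$-homogeneity of $H$ in $p$ (already built into $H(x,p) = \sup_{v\in F(x)}\langle -v,p\rangle$) and Petrov's condition (through $\mu(\xi) > 0$ and hence $H \equiv 1 \neq 0$ along the characteristic) are used.
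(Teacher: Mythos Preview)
Your argument is correct and takes a genuinely different route from the paper. The paper proceeds by contradiction, relying on the auxiliary Lemma~\ref{lemmaTech} (which computes $\frac{d}{ds}\det Y_{\xi,s}$ via the cofactor formula and uses $(H2)$ to guarantee that $P_{\xi,t}\theta\notin\ker H_{pp}$), and then performs a case analysis on whether the zero of $\det Y_{\xi,t}(\xi,\cdot)$ at $t$ is isolated or accumulated. Your proof is direct: you read off the block structure $Y_{\xi,t}=[\,Y_\xi\mid \nabla_p H\,]$, establish the pointwise conservation law $P(s)^\top Y_\xi(\xi,s)\equiv 0$ from $1$-homogeneity (via $H_{pp}P=0$ and $H_{px}P=H_x$) together with the initial orthogonality, and then kill the time-coefficient $\beta$ by pairing against $P(t)$ and invoking Euler's identity with $H\equiv 1$. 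This is shorter and more elementary, and---as you correctly observe---does not use $(H2)$ at all; only $1$-homogeneity of $H$ in $p$ and Petrov's condition (to ensure $H\equiv 1$ along the characteristic) enter. By contrast, the paper's route, through Lemma~\ref{lemmaTech}, yields the extra information that the derivative of the determinant is strictly positive at a point where $\mathrm{rk}\,Y_{\xi,t}=n-1$, which may be of independent interest but is not needed for the equivalence in Proposition~\ref{Cristina}.
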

To prove the above proposition, we need the following lemma.
\begin{lemma}\label{lemmaTech}
Under the assumptions of Proposition \ref{Cristina}, it holds that, for any $t>0$,
\[ \frac{d}{d s} \det Y_{\xi , s} (\xi,s)\mid_{s=t } \neq 0 \quad \Longleftrightarrow \quad rk Y_{\xi,t}(\xi,t )=n-1.   \]
\end{lemma}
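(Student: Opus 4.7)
The plan is to compute $D(s):=\det M(s)$, where $M(s):=Y_{\xi,s}(\xi,s)$ is the $n\times n$ matrix whose columns are the columns $v_1(s),\dots,v_{n-1}(s)$ of $Y_\xi(\xi,s)$ together with $v_n(s):=\nabla_p H(Y(\xi,s),P(\xi,s))$, by means of Jacobi's formula $D'(s)=\mathrm{tr}(\mathrm{adj}\,M(s)\,M'(s))$. Writing also $q_j:=(P_\xi)_j$ for $j\le n-1$ and $q_n:=-\nabla_x H(Y,P)$, the variational system \eqref{CP_prem:ch3} says that the $j$-th column of $M'(s)$ equals $H_{xp}(Y,P)v_j+H_{pp}(Y,P)q_j$ (for $j=n$ this comes from $\dot v_n=H_{xp}\dot Y+H_{pp}\dot P$ and the Hamiltonian system). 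In the easy case $\mathrm{rk}\,M(t)\le n-2$ one has $\mathrm{adj}\,M(t)=0$, so $D'(t)=0$; the real content lies in the case $\mathrm{rk}\,M(t)=n-1$, where the task is to prove $D'(t)\neq 0$.

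In that case, pick $\alpha\in\ker M(t)\setminus\{0\}$ and $\beta\in\ker M(t)^T\setminus\{0\}$. Then $\mathrm{adj}\,M(t)=c\alpha\beta^T$ for some $c\neq 0$, so $D'(t)=c\,\beta^T M'(t)\alpha$. Using $M(t)\alpha=0$, the column-by-column formula gives
\[
M'(t)\alpha \;=\; H_{xp}(Y,P)\,M(t)\alpha + H_{pp}(Y,P)\,z \;=\; H_{pp}(Y,P)\,z,\qquad z:=\sum_{j=1}^n \alpha_j q_j.
\]

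The pivotal step is a symplectic identity: $Y_{\xi,t}(\xi,s)^T P_{\xi,t}(\xi,s)=P_{\xi,t}(\xi,s)^T Y_{\xi,t}(\xi,s)$ for every $s\ge 0$. A direct computation using \eqref{CP_prem:ch3} together with the symmetry of $H_{pp}$ and $H_{xx}$ shows the skew part is constant in $s$; to see it vanishes at $s=0$ I would invoke (a) the symmetry $\langle\phi_{\eta_i},g_{\eta_j}\rangle=\langle\phi_{\eta_j},g_{\eta_i}\rangle$, which drops out of $g=\mu\nabla b_{\mathcal K}$, the symmetry of $\nabla^2 b_{\mathcal K}$ and the tangentiality $\langle\nabla b_{\mathcal K},\phi_{\eta_j}\rangle=0$ (killing the $\nabla\mu\otimes\nabla b_{\mathcal K}$ part of $Dg$), together with (b) the identity $\langle\phi_{\eta_i},\nabla_x H\rangle+\langle g_{\eta_i},\nabla_p H\rangle=0$ obtained by differentiating the Hamilton-Jacobi constraint $H(\xi,g(\xi))\equiv 1$. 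Translated into columns this reads $\langle v_i,q_j\rangle=\langle v_j,q_i\rangle$, and pairing with $\alpha$ yields $\langle v_i,z\rangle=\langle\sum_j\alpha_j v_j,q_i\rangle=0$, so $z\in\ker M(t)^T=\mathbb R\beta$; thus $z=\lambda\beta$ for some scalar $\lambda$.

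To finish I need $\lambda\neq 0$ and $\beta^T H_{pp}\beta\neq 0$. For the first, the columns of the $2n\times n$ matrix $\binom{Y_{\xi,t}}{P_{\xi,t}}$ are linearly independent for every $s$, by linear-ODE preservation from independence at $s=0$ (where $Y_{\xi,t}(\xi,0)$ is already invertible because $\langle\nabla_p H(\xi,p),\nabla b_{\mathcal K}(\xi)\rangle=1/\mu\neq 0$); hence $M(t)\alpha=0$ with $\alpha\neq 0$ forces $z\neq 0$, i.e.\ $\lambda\neq 0$. For the second, under (H2) $\ker H_{pp}(Y,P)=\mathbb R P(\xi,t)$ and $H_{pp}\ge 0$ is strictly positive on $P^\perp$; Euler's identity gives $\langle P,v_n\rangle=\langle P,\nabla_p H\rangle=H(Y,P)\equiv 1$, so $\beta\perp v_n$ prevents $\beta\in\mathbb R P$, and consequently $\beta^T H_{pp}\beta>0$. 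Therefore $D'(t)=c\lambda\,\beta^T H_{pp}(Y,P)\beta\neq 0$. The main obstacle is verifying the symplectic identity at $s=0$: it uses essentially that $g$ is the Hamilton-Jacobi initial datum $\mu\nabla b_{\mathcal K}$, not merely some $C^1$ vector field on $\partial\mathcal K$.
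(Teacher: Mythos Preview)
Your proof is correct and follows essentially the same route as the paper's: Jacobi's formula, the rank-one structure of the adjugate when $\mathrm{rk}\,M(t)=n-1$, and the exclusion of $z=P_{\xi,t}\alpha$ from $\ker H_{pp}=\mathbb{R}P$ via Euler's identity together with the constraint $H(Y,P)\equiv 1$. The only difference is packaging: the paper delegates the reduction $D'(t)=c\,\langle H_{pp}z,z\rangle$ to Lemmas~4.2--4.3 of \cite{MR1916603} (which already encode your symplectic identity $Y_{\xi,t}^T P_{\xi,t}=P_{\xi,t}^T Y_{\xi,t}$ and hence $z\in\ker M(t)^T$), whereas you unpack this explicitly---including the verification at $s=0$, where your remark that one genuinely needs $g=\mu\nabla b_{\mathcal K}$ (not an arbitrary $C^1$ field) is on point.
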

\begin{proof}
First, suppose that $rk Y_{\xi,t}(\xi, t )=n-1$.
Following the same reasoning as in the proof of \cite[Lemma~4.3]{MR1916603}, we have that
\[ \frac{d}{d s} \det Y_{\xi,s}(\xi,s)\mid_{s=t}=  tr  \left( H_{pp}(Y(\xi, t),P(\xi, t)) P_{\xi,t}(\xi, t)Y^{+}_{\xi,t}(\xi, t ) \right),  \]
where $A^+$ denotes the transpose of the matrix of the cofactors of a matrix $A$, that is, $A A^+=A^+A= (det A)I_n$. Moreover, if $\theta$ is such that $\ker Y_{\xi,t} (\xi_0, t )= \theta \mathbb{R}$, then by \cite[Lemma~4.2]{MR1916603} there exists $c>0$ such that
\begin{equation}\label{robavecchia}
\frac{d}{d s} \det Y_{\xi,s}(\xi,s)\mid_{s= t }= c H_{pp}(Y(\xi, t),P(\xi, t)) P_{\xi,t}(\xi,t )\theta\cdot P_{\xi,t}(\xi,t )\theta . 
\end{equation} 
We claim that $P_{\xi,t}(\xi,t )\theta \not\in \ker H_{pp}(Y(\xi, t),P(\xi, t))$. If not, there exists $\lambda\in\mathbb{R}\setminus \lbrace 0\rbrace$ such that $P_{\xi,t}(\xi,t )\theta =\lambda P(\xi,t)$. Now, observe that for all $\xi\in \partial\mathcal{K}$ and all $t\geq 0$ it holds that $ H(Y(\xi,t),P(\xi,t) )=1$. %By the fact that $H(\xi,g(\xi))=1$ for all $\xi\in\mathbb{R}^n$ and the backward Hamiltonian system \eqref{CJ*:ch3}, it is easy to see that the map $$\partial \mathcal{K} \times \mathbb{R} \ni(\xi,t )  \mapsto H(Y(\xi,t),P(\xi,t) )$$ is constant with constant value one.
Hence, taking the Jacobian of this map at $(\xi,t)$ and recalling that $\ker Y_{\xi,t} (\xi_0, t )= \theta \mathbb{R}$ we obtain that
\[
0= \langle Y_{\xi,t}(\xi, t )\theta, H_x(Y(\xi, t),P(\xi, t)) \rangle + \langle P_{\xi,t}(\xi,t)\theta, H_p(Y(\xi, t),P(\xi, t)) \rangle 
\]
\[ 
= \langle P_{\xi,t}(\xi,t)\theta, H_p(Y(\xi, t),P(\xi, t)) \rangle.
\]
On the other hand, since we are assuming that $P_{\xi,t}(\xi,t )\theta =\lambda P(\xi,t)$, we have
\begin{equation*}
\begin{split}
&\langle P_{\xi,t}(\xi,t)\theta, H_p(Y(\xi, t),P(\xi, t)) \rangle = \langle \lambda P(\xi,t)\theta, H_p(Y(\xi, t),P(\xi, t)) \rangle \\
&=\lambda \langle P(\xi,t)\theta, H_p(Y(\xi, t),P(\xi, t)) \rangle = \lambda H(Y(\xi, t),P(\xi, t))=\lambda,
\end{split}
\end{equation*}
that is in clear contradiction with the equality that is above it. This finally shows that $P_{\xi,t}(\xi,t_0)\theta \not\in \ker H_{pp}(Y(\xi, t),P(\xi, t))$, and so from \eqref{robavecchia} we obtain that $$ \frac{d}{ds} \det Y_{\xi,s}(\xi,s)\mid_{s=t }>0.$$\\
For the proof of the other implication, we refer the reader to the proof of \cite[Lemma~4.3]{MR1916603}.
\end{proof}
\begin{proof}[Proof of Proposition \ref{Cristina}]
It is sufficient to show that if $\det Y_{\xi,t}(\xi, t )=0$ then $rk Y_{\xi}(\xi, t ) < n-1$. Aiming for a contradiction, suppose $\det Y_{\xi,t}(\xi, t )=0$ but $rk Y_{\xi}(\xi, t ) = n-1$. Hence, the vectors $Y_{\eta_i}(\xi,t)$, $i=1,...,n-1$, are linearly independent and, by continuity, there exists $\delta>0$ such that for any time $s\in (t-\delta,t+\delta)$ the vectors $Y_{\eta_i}(\xi_0,t)$, $i=1,...,n-1$, are still linearly independent. We can distinguish to cases:
\begin{itemize}
\item[1.] there exists a sequence of times $t_k \rightarrow t$ as $k \rightarrow \infty$ such that $\det Y_{\xi,t}(\xi, t_k )=0$ for all $k$,
\item[2.] there exists a constant $\delta'\in (0,\delta)$ such that $\det Y_{\xi,t}(\xi, s )\neq 0$ for all $s\in (t-\delta',t+\delta')$.
\end{itemize}
For the discussion of the first case, we refer the reader to the proof of \cite[Theorem 6.1]{MR1916603}. In the second case,  we have that
\[  \frac{d}{d s} \det Y_{\xi , s} (\xi,s)\neq 0 \quad \mbox{for all } s\in (t-\delta',t+\delta')
.\]
Then, Lemma \ref{lemmaTech} implies that $rk Y_{\xi,t} (\xi,t) < n-1$ and this yields the contradiction.
\end{proof}
Under the additional assumption $(H2)$, the above proposition gives an equivalent characterization of conjugate times considering only the spatial Jacobian of the map $Y(\cdot,\cdot)$. More specifically, it follows that a time $\overline{t}$ is conjugate for $\xi_0$ if and only if 
\[
\overline{t}=\sup \lbrace t\in [0,+\infty) :\;  rk Y_{\xi}(\xi_0,s)= n-1 
\mbox{ for all } s\in [0,t] \rbrace,
\] 
and $\overline{t}<+ \infty$. Consequently, $rk Y_{\xi}(\xi_0,\overline{t})< n-1$.
\begin{remark}
Let us suppose that $(H2)$ holds true. If there is no conjugate time for $\xi_0$ on $[0,\overline{t} ]$, then $Y(t,\cdot)$ maps a neighborhood $I_{\xi_0}$ of $\xi_0$ in $ \partial \mathcal{K}$ onto the level sets of the minimum time function, that is, for all $t\in [0,\overline{t} ]$,
$$Y(t,\cdot): I_{\xi_0} \rightarrow \Gamma_ t \subset \mathbb{R}^{n-1},  $$  
where $\Gamma_t := \lbrace x\in\mathbb{R}^n:~ T(x) = t \rbrace$. Moreover, $Y(t,\cdot)$ gives a diffemorphism from a neighborhood of $\xi_0$ in $\partial \mathcal{K}$ onto an open neighborhood of $\Gamma_t$, for any time $t$ smaller than the conjugate time $\overline{t}$.
\end{remark} 

\section[First-order sensitivity relations and applications]{First-order sensitivity relations for the minimum time problem and some applications}\label{Chapter3:results}
The scope of this section is twofold. First, we discuss some sensitivity relations of first order. Subsequently, we apply these results to derive sufficient conditions for the propagation of the regularity of the minimum time function along optimal trajectories.
\subsection{Proximal subdifferentiability of the minimum time function}
The forward propagation of the dual arc into the proximal subdifferential of $T$ is already known for minimum time problems when the dynamic is described by a control system with sufficiently smooth dynamics (see \cite{frankowska:hal-00981788}). We shall extend this result to the differential inclusion case.
\begin{theorem}\label{Lemma_sub_prox:ch3}
Assume $(SH)$ and $(H)$ hold. Let $x_0 \in\mathcal{R}\setminus \mathcal{K}$, let $\overline{x}$ be an optimal trajectory starting from $x_0$  reaching $\mathcal{K}$ in time $T(x_0)$, and $\overline{p}:[0,T(x_0)] \rightarrow \mathbb{R}^n$ be an
 arc such that $(\overline{x},\overline{p})$ solves the system: for a.e. $t\in \left[ 0, T(x_0) \right]$,
\begin{equation*}
\left\{\begin{array}{rllrrl}
-\dot{x}(t) &\in& \partial_p^- H(x(t),p(t)), \quad & x(0)&=&x_0 \\
\dot{p}(t) &\in& \partial_x^- H(x(t),p(t)), \quad & p(0)&\in& \partial^{-,P} T(x_0).
\end{array}\right.
\end{equation*}
Then, there exist constants $c, r>0$ such that, for all
$t\in[0,T(x_0))$ and $h \in B (0,r)$,
\begin{equation*}
T(\overline{x}(t)+h)- T (\overline{x}(t)) \geq\  \langle \overline{p}(t),h\rangle  - c \mid h \mid^2.
\end{equation*}
Consequently, 
\begin{equation}
\overline{p}(t)\in \partial^{-,P}T(\overline{x}(t)) \quad \mbox{ for all } t\in [0,T(x_0)).
\end{equation}  
%(-\overline{p}(t),-c I_n )\in J^{2,-}_x V(t,\overline{x}(t)) $ for
%all $ t\in[t_0,T].$
\end{theorem}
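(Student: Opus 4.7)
The plan is to combine three ingredients: the proximal subdifferential inequality at $x_0$, the dynamic programming principle, and a carefully chosen companion trajectory that lets the linear part propagate while the quadratic error stays under control. From the hypothesis $\overline{p}(0)\in\partial^{-,P}T(x_0)$ one obtains constants $c_0,\rho_0>0$ such that $T(z)\ge T(x_0)+\langle\overline{p}(0),z-x_0\rangle-c_0|z-x_0|^2$ for $z\in B(x_0,\rho_0)$. Fixing $t\in[0,T(x_0))$, I would build, for each small $h$, an admissible arc $y_h(\cdot)$ on $[0,t]$ with $y_h(t)=\overline{x}(t)+h$ and $|y_h(0)-x_0|\le C|h|$; combining this with the Bellman inequality $T(y_h(0))\le t+T(\overline{x}(t)+h)$ and the optimality identity $T(\overline{x}(t))=T(x_0)-t$ reduces the whole question to the sharp lower bound
\[
\langle\overline{p}(0),y_h(0)-x_0\rangle\ge\langle\overline{p}(t),h\rangle-C_1|h|^2.
\]

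The decisive choice is to take $y_h$ as the solution of the Cauchy problem
\[
\dot y(s)=-\nabla_p H(y(s),\overline{p}(s)),\qquad y(t)=\overline{x}(t)+h.
\]
This Carath\'eodory ODE is well posed because, by Remark \ref{RemarkDualArc:ch3}(a), $\overline{p}(s)\neq 0$ on $[0,t]$ (the case $\overline{p}\equiv 0$ rendering the theorem trivial), so $(H)(ii)$ provides the required Lipschitz dependence on the state. Since the velocity so defined realizes the maximum in the definition of $H$ at the pair $(y_h(s),\overline{p}(s))$, $y_h$ is admissible for the differential inclusion and $\langle\overline{p}(s),\dot y_h(s)\rangle=-H(y_h(s),\overline{p}(s))$ by Euler's identity (1-homogeneity of $H$ in $p$). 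Gronwall gives $|y_h(s)-\overline{x}(s)|\le C|h|$. Setting $\varphi(s):=\langle\overline{p}(s),y_h(s)-\overline{x}(s)\rangle$, I would then bound $\dot\varphi$ from above by splitting it into two pieces: the semiconvexity of $H(\cdot,\overline{p}(s))$ from $(H)(i)$, applied to the subgradient $\dot{\overline{p}}(s)\in\partial_x^-H(\overline{x}(s),\overline{p}(s))$, yields
\[
\langle\dot{\overline{p}}(s),y_h(s)-\overline{x}(s)\rangle\le H(y_h(s),\overline{p}(s))-H(\overline{x}(s),\overline{p}(s))+\tfrac{c_r|\overline{p}(s)|}{2}|y_h(s)-\overline{x}(s)|^2,
\]
while the Euler relation gives $\langle\overline{p}(s),\dot y_h(s)-\dot{\overline{x}}(s)\rangle=-H(y_h(s),\overline{p}(s))+H(\overline{x}(s),\overline{p}(s))$. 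When the two are summed, the Hamiltonian differences cancel \emph{exactly} and only the quadratic remainder of order $|h|^2$ survives.

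Integrating the a.e.\ bound $\dot\varphi(s)\le C'|h|^2$ over $[0,t]$ produces the displayed lower bound on $\langle\overline{p}(0),y_h(0)-x_0\rangle$, and substituting it into the proximal inequality at $x_0$ (applied at $z=y_h(0)$) gives
\[
T(\overline{x}(t)+h)-T(\overline{x}(t))\ge\langle\overline{p}(t),h\rangle-c|h|^2
\]
with $c$ independent of $t\in[0,T(x_0))$ and $r$ independent of it as well, which is exactly $\overline{p}(t)\in\partial^{-,P}T(\overline{x}(t))$. The main obstacle I expect is justifying the exact cancellation of the two Hamiltonian terms in $\dot\varphi$: it is what distinguishes the argument from a naive perturbation approach and relies crucially on driving both $\overline{x}$ and $y_h$ with the \emph{same} costate $\overline{p}(\cdot)$ through $\nabla_p H$; otherwise one would be forced to compare two Hamiltonian flows with slightly different costates and to control a Bregman-type residual in $p$, which is not available under the stated regularity of $H$.
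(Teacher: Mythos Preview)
Your proposal is correct and follows essentially the same route as the paper's proof: the same companion trajectory $y_h$ driven by $-\nabla_p H(\cdot,\overline{p}(s))$ with terminal condition $\overline{x}(t)+h$, the same dynamic programming inequality, the same Gronwall bound, and the same integration of $\frac{d}{ds}\langle\overline{p}(s),y_h(s)-\overline{x}(s)\rangle$ in which the Hamiltonian differences cancel via Euler's identity and semiconvexity leaves only the quadratic remainder. One minor point: rather than dismissing the case $\overline{p}\equiv 0$ as ``trivial'', the paper observes (via \cite{MR1613909}) that $0\notin\partial^{-,P}T(x_0)$, so this case simply does not occur.
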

\begin{proof}
First of all, recall that $0\not\in \partial^{-,P} T(x_0)$ (see, for instance, \cite[Theorem~5.1]{MR1613909}), and so the dual arc $\overline{p}$ never vanishes on $[0,T(x_0)]$ by Remark \ref{RemarkDualArc:ch3}.
%We can distinguish two cases:
%\begin{enumerate}
%\item[(i)] either $\overline{p}(t)\neq 0$ for all $t\in [0,T(x_0)]$,
%\item[(ii)] or $\overline{p}(t)=0$ for all $t\in [0,T(x_0)]$.
%\end{enumerate}
%Suppose first to be in case (i). 
Since  $\overline{p}(0)\in \partial^{-,P}T(x_0)$, there exist $c_0,r_0>0$ such that for every $h \in B(0,r_0)$,
\begin{equation}\label{Step1:ch3}
T(x_0 + h)- T(x_0)  \geq  \langle \overline{p}(0), h \rangle -c_0 |h|^2.
\end{equation}
Fix $t\in( 0,T(x_0))$. Recall that $\overline{x}(\cdot)$ is the unique solution of the final value problem
\begin{equation*}
\left\{
\begin{array}{l}
-\dot{x}(s)= \nabla_p H(x(s),\overline{p}(s))\quad \mbox{ for all } s\in \left[ 0, t \right],\\
x(t)=\overline{x}(t).
\end{array}\right.
\end{equation*}
For all $h \in B$, let $x_h(\cdot)$ be
the solution of the system
\begin{equation*}
\left\{
\begin{array}{l}
-\dot{x}(s)= \nabla_p H(x(s),\overline{p}(s))\quad \mbox{ for all } s\in \left[ 0, t \right],\\
x(t)=\overline{x}(t)+h.
\end{array}\right.
\end{equation*}
From  the optimality of $\overline{x}(\cdot)$ and the dynamic
programming principle we deduce that
\begin{equation}\label{Mah:ch3}
\begin{split}
T(\overline{x}(t)+h)-&T(\overline{x}(t)) - \langle \overline{p}(t),h \rangle =
T(x_h(t))-T(\overline{x}(t))- \langle \overline{p}(t),h \rangle \\
&\geq T(x_h(0))- T(x_0)- \langle \overline{p}(t),h \rangle .
\end{split}
\end{equation}
From the sublinearity of $F$ and $(H)(ii)$, using a standard argument based on Gronwall's lemma, one can show that there exists $k>0$,
independent of $t\in (0,T(x_0))$, such that
\begin{equation}\label{StimaVecchia:ch3}
\| x_h- \overline{x} \|_{\infty}  \leq e^{k T} \mid h \mid, \quad
\forall~ h \in B .
\end{equation}
For all $h \in B(0,r)$ with $r := \min \lbrace 1, r_0 e^{-k T}\rbrace $, by \eqref{Step1:ch3}, \eqref{Mah:ch3} and \eqref{StimaVecchia:ch3} we have that
\begin{equation}\label{step2}
T(\overline{x}(t)+h)-T(\overline{x}(t)) - \langle \overline{p}(t),h \rangle \geq - \langle \overline{p}(t),h \rangle + \langle \overline{p}(0), x_h(0)- x_0 \rangle  - c_0 \mid x_h(0)-x_0 \mid^2. 
\end{equation}
Moreover, 
\[
\langle \overline{p}(0),x_h(0)- x_0 \rangle - \langle \overline{p}(t),h \rangle = - \int_{0}^t \frac{d}{d s} \langle \overline{p}(s),  x_h(s) - \overline{x}(s) \rangle\ ds
\]
\[
= - \int_{0}^t \Big( \langle \dot{\overline{p}}(s), x_h(s)- \overline{x}(s)\rangle + \langle \overline{p}(s),\dot{x}_h(s)-\dot{\overline{x}}(s) \rangle \Big) d s
\]
\[
= - \int_{0}^t \Big( \langle \dot{\overline{p}}(s), x_h(s) - \overline{x}(s)\rangle - H(x_h(s),\overline{p}(s))+ H(\overline{x}(s),\overline{p}(s)) \Big) d s.
\]
Since $\dot{\overline{p}}(s)\in \partial_x^-
H(\overline{x}(s),\overline{p}(s))$ a.e. in $[0,T(x_0)]$, assumption
(H)$\,(i)$ implies that
\begin{equation}\label{concluu:ch3}
\langle \overline{p}(0),x_h(0)- x_0 \rangle - \langle \overline{p}(t),h\rangle \geq c_2 \int_{0}^t  \mid \overline{p}(s)\mid \mid x_h(s)- \overline{x}(s)\mid^2,
\end{equation}
where $c_{2}$ is a suitable constant independent from $t \in (0,T(x_0))$. From \eqref{StimaVecchia:ch3}-\eqref{concluu:ch3} we obtain our conclusion.
\end{proof}
\begin{theorem}\label{Lemma_sub}
Assume $(SH)$ and $(H)$ hold. Let $x_0 \in\mathcal{R}\setminus \mathcal{K}$, let $\overline{x}$ be an optimal trajectory starting from $x_0$ reaching $\mathcal{K}$ in time $T(x_0)$, and $\overline{p}:[0,T(x_0)] \rightarrow \mathbb{R}^n$ be an
 arc such that $(\overline{x},\overline{p})$ solves the system: for a.e. $t\in \left[ 0, T(x_0) \right]$,
\begin{equation}\label{sub}
\left\{\begin{array}{rllrrl}
-\dot{x}(t) &\in& \partial_p^- H(x(t),p(t)), \quad & x(0)&=&x_0 \\
\dot{p}(t) &\in& \partial_x^- H(x(t),p(t)), \quad & p(0)&\in& \partial^{-} T(x_0).
\end{array}\right.
\end{equation}
Then, $\overline{p}(\cdot)$ satisfies
\begin{equation}
\overline{p}(t)\in \partial^{-}T(\overline{x}(t)) \quad \mbox{ for all } t\in [0,T(x_0)).
\end{equation}  
%(-\overline{p}(t),-c I_n )\in J^{2,-}_x V(t,\overline{x}(t)) $ for
%all $ t\in[t_0,T].$
\end{theorem}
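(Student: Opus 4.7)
The plan is to mimic the proof of Theorem \ref{Lemma_sub_prox:ch3}, but to replace the quadratic cushion furnished by a proximal subgradient at $x_0$ with the first-order $\liminf$ condition furnished by a Fr\'echet subgradient. Since the target is only to show Fr\'echet subdifferentiability at $\overline{x}(t)$, a first-order remainder on the right-hand side is all that is needed, so the structural parts of the previous proof carry over essentially verbatim.

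First I would fix $t\in(0,T(x_0))$ and, for each $h$ in a sufficiently small ball, define the comparison arc $x_h(\cdot)$ on $[0,t]$ as the unique solution to the backward Cauchy problem $-\dot{x}_h(s)=\nabla_p H(x_h(s),\overline{p}(s))$ with $x_h(t)=\overline{x}(t)+h$, exactly as in Theorem \ref{Lemma_sub_prox:ch3}. The Lipschitz dependence from $(H)(ii)$ and Gronwall's lemma yield a constant $k>0$, independent of $t$, such that
\begin{equation*}
\|x_h-\overline{x}\|_\infty \le e^{kT(x_0)}|h|.
\end{equation*}
The optimality of $\overline{x}$ together with the dynamic programming principle then gives
\begin{equation*}
T(\overline{x}(t)+h)-T(\overline{x}(t))-\langle\overline{p}(t),h\rangle \;\ge\; T(x_h(0))-T(x_0)-\langle\overline{p}(t),h\rangle.
\end{equation*}

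Next I would split the right-hand side as
\begin{equation*}
\bigl[T(x_h(0))-T(x_0)-\langle\overline{p}(0),x_h(0)-x_0\rangle\bigr] \;+\; \bigl[\langle\overline{p}(0),x_h(0)-x_0\rangle-\langle\overline{p}(t),h\rangle\bigr]
\end{equation*}
and estimate the two brackets separately. For the first bracket, the assumption $\overline{p}(0)\in\partial^- T(x_0)$ means that for every $\varepsilon>0$ there exists $\delta>0$ such that whenever $0<|y-x_0|<\delta$ one has $T(y)-T(x_0)-\langle\overline{p}(0),y-x_0\rangle\ge -\varepsilon|y-x_0|$; applied at $y=x_h(0)$ and combined with the Gronwall bound above, this gives a lower estimate of $-\varepsilon e^{kT(x_0)}|h|$ for all $h$ small enough (depending on $\varepsilon$). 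For the second bracket, I would reproduce the computation culminating in \eqref{concluu:ch3}: write it as
\begin{equation*}
-\int_0^t \Bigl(\langle\dot{\overline{p}}(s),x_h(s)-\overline{x}(s)\rangle - H(x_h(s),\overline{p}(s))+H(\overline{x}(s),\overline{p}(s))\Bigr)\,ds,
\end{equation*}
use $\dot{\overline{p}}(s)\in\partial_x^- H(\overline{x}(s),\overline{p}(s))$ together with the semiconvexity in $x$ from $(H)(i)$ (with positive homogeneity in $p$ giving the constant $c_r|\overline{p}(s)|$), and invoke continuity of $\overline{p}$ on the compact interval $[0,T(x_0)]$ to bound this expression from below by $-C|h|^2$ for a constant $C$ independent of $h$.

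Combining the two estimates and dividing by $|h|$ yields
\begin{equation*}
\frac{T(\overline{x}(t)+h)-T(\overline{x}(t))-\langle\overline{p}(t),h\rangle}{|h|} \;\ge\; -\varepsilon e^{kT(x_0)} - C|h|,
\end{equation*}
so taking $\liminf_{h\to 0}$ and then letting $\varepsilon\to 0^+$ establishes $\overline{p}(t)\in\partial^- T(\overline{x}(t))$. I expect the only delicate point to be bookkeeping: unlike in the proximal case, the estimate at $x_0$ has no intrinsic quadratic scale, so one must propagate the ``$\varepsilon$'' through the Gronwall bound and make sure the smallness needed for the Fr\'echet definition at $x_0$ is expressed in terms of $|h|$ rather than $|x_h(0)-x_0|$. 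This is precisely why the radius of validity of the final inequality depends on $T(x_0)$ through the factor $e^{kT(x_0)}$; no genuinely new idea beyond the proof of Theorem \ref{Lemma_sub_prox:ch3} is needed.
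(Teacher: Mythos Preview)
Your argument for the case $\overline{p}\not\equiv 0$ is correct and coincides with what the paper does: it simply says that this case ``is similar to the proof of the above theorem,'' i.e., exactly the adaptation you spell out, replacing the quadratic lower bound at $x_0$ by the $o(|y-x_0|)$ bound coming from the Fr\'echet definition and letting $\varepsilon\to 0$ at the end.

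There is, however, one point you pass over. Your construction of the comparison arc $x_h$ via the ODE $-\dot x_h(s)=\nabla_p H(x_h(s),\overline p(s))$, as well as the Lipschitz estimate drawn from $(H)(ii)$, presupposes $\overline p(s)\neq 0$ for all $s$; at $p=0$ the map $p\mapsto H(x,p)$ is not differentiable (its subdifferential is $-F(x)$). In Theorem~\ref{Lemma_sub_prox:ch3} this was secured by the cited fact $0\notin\partial^{-,P}T(x_0)$, which together with Remark~\ref{RemarkDualArc:ch3}(a) forces $\overline p\not\equiv 0$. For the Fr\'echet subdifferential you give no such justification, and the paper does \emph{not} argue that $0\notin\partial^- T(x_0)$; instead it treats the possibility $\overline p\equiv 0$ as a separate case and refers to a different perturbation scheme (from the companion paper on the Mayer problem) that does not rely on $\nabla_p H$.

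You can close the gap more cheaply than the paper does: since $T(\overline x(t))=T(x_0)-t$ while $|\overline x(t)-x_0|\le Mt$ for some $M>0$ coming from $(SH)\,3)$, one has
\[
\frac{T(\overline x(t))-T(x_0)}{|\overline x(t)-x_0|}\le -\frac{1}{M}\qquad\text{for small }t>0,
\]
so the $\liminf$ defining $\partial^- T(x_0)$ is strictly negative at $p=0$, whence $0\notin\partial^- T(x_0)$ and $\overline p\not\equiv 0$. Adding this one-line observation makes your proof complete; without it, the vanishing case is formally unaddressed.
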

\begin{proof}
The proof of the case $\overline{p}\neq 0$ is similar to the proof of the above theorem. As the case $\overline{p} = 0$, we refer the reader to the proof of \cite[Theorem~2.1]{Nostro}, where we have described a strategy for constructing perturbations of the optimal trajectory when the dual arc is vanishing.
\end{proof}
%\begin{proof}
%As regards the case $\overline{p} = 0$, we refer the reader to the proof of Theorem 2.1 in \cite{CFS}, where we have described a strategy for constructing perturbations $x_h$ of the optimal trajectory when the dual arc is vanishing.
%\end{proof}

\subsection{First application: differentiability of the minimum time function}
Here, we provide some sufficient conditions in order that the optimal trajectories starting from a point in the domain of differentiability of the minimum time function $T(\cdot)$ stay in such a set whenever Petrov's condition is satisfied at the final point of the optimal trajectories. 
The same result has been obtained in \cite{frankowska:hal-00981788} in the case of smooth targets and smooth control systems with a Hamiltonian of class $C^{1,1}$. Let us also mention that the fact that $T(\cdot)$ is differentiable along an optimal trajectory starting from $x$ for all time in the open interval $(0,T(x))$ has been proved earlier in \cite{MR1780579} in the case of exit-time problems with smooth control systems and a strongly convex Hamiltonian in $p$, under the Petrov condition $(PC)$. More recently, in \cite{MR3005035} this result was extended to the case of differential inclusions whit a strictly quasi-convex Hamiltonian in $p$. On the other hand, in our context the Hamiltonian is no longer strictly convex being $1$-homogeneous  in $p$ and, in general, is not strictly quasi-convex, as shown by \cite[Example 1.]{MR3005035}. 
Here, the Hamiltonian is only assumed to satisfy $(H)$, while the target is supposed to satisfy an interior sphere property.
\begin{theorem}\label{differentiability}
Assume that $(SH)$ and $(H)$ hold. Let $x_0 \in \mathcal{R}\setminus \mathcal{K}$, let $\overline{x}$ be an optimal trajectory starting from $x_0$ reaching the target $\mathcal{K}$ at time $T(x_0)$. Let $R>0$, and suppose that %one of the following conditions holds true:
%\begin{itemize}
%\item[(a)] $\mathcal{K}$ is any nonempty compact subset of $\mathbb{R}^n$ and for all $x\in\mathbb{R}^n$, $F(x)$ satisfies the inner sphere property of radius $R$;
%\item[(b)] 
$\mathcal{K}$ is a nonempty subset of $\mathbb{R}^n$ and satisfies the inner sphere property of radius $R$.
%\end{itemize}
Suppose, moreover, that $H(\overline{x}(T),\nu)>0$ for any $\nu\in N^{P}_{\mathcal{K}}(\overline{x}(T))$, $\nu \neq 0$, and $T$ is differentiable at $x_0$. 
Then, we have that $T$ is differentiable at $\overline{x}(t)$ for all $t\in [0,T(x_0))$. %if and only if $H(\overline{x}(t),\nu)\neq 0$ for any $\nu\in N_{\mathcal{K}}^P (\overline{x}(T))$ .  
\end{theorem}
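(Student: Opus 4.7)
The plan is to show that the dual arc $\overline{p}$ associated with the optimal trajectory $\overline{x}$ lies simultaneously in the Fr\'echet supergradient and in the Fr\'echet subgradient of $T$ at every $\overline{x}(t)$ with $t \in [0, T(x_0))$. Once this is established, the conclusion follows from the standard fact that $\partial^+ f(z) \cap \partial^- f(z) \neq \emptyset$ forces $f$ to be differentiable at $z$, with $\nabla f(z)$ the unique common element.

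First, since $\mathcal{K}$ has the inner sphere property of radius $R$, there exists a nonzero proximal inner normal $\nu$ to $\mathcal{K}$ at $\overline{x}(T(x_0))$ realized by a ball. Combined with the hypothesis $H(\overline{x}(T(x_0)), \zeta) > 0$ for every nonzero $\zeta \in N^P_{\mathcal{K}}(\overline{x}(T(x_0)))$, Proposition \ref{NC++} and Remark \ref{WithPetrov} yield an absolutely continuous dual arc $\overline{p}$ satisfying the Hamiltonian system along $\overline{x}$ with transversality
\[ \overline{p}(T(x_0)) = -\frac{\nu}{H(\overline{x}(T(x_0)),-\nu)}. \]
Invoking the Fr\'echet-supergradient sensitivity result of \cite{MR3318195} (forward propagation of the Fr\'echet supergradient of $T$ along the Hamiltonian flow for targets with the inner sphere property and endpoint Petrov's condition), one obtains
\[ \overline{p}(t) \in \partial^+ T(\overline{x}(t)) \qquad \text{for every } t \in [0, T(x_0)). \]

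Second, since $T$ is differentiable at $x_0$ by assumption, $\partial^+ T(x_0) = \partial^- T(x_0) = \{\nabla T(x_0)\}$. Evaluating the above inclusion at $t = 0$ forces $\overline{p}(0) = \nabla T(x_0)$, so in particular $\overline{p}(0) \in \partial^- T(x_0)$. The hypotheses of Theorem \ref{Lemma_sub} are therefore met, and we deduce
\[ \overline{p}(t) \in \partial^- T(\overline{x}(t)) \qquad \text{for every } t \in [0, T(x_0)). \]
Combining the two inclusions yields that $\partial^+ T(\overline{x}(t)) \cap \partial^- T(\overline{x}(t))$ is nonempty for every $t \in [0, T(x_0))$, whence $T$ is differentiable at $\overline{x}(t)$.

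The delicate point is the first step: one must invoke a Fr\'echet-supergradient sensitivity relation valid for the differential-inclusion dynamics under only the inner sphere property of $\mathcal{K}$ and \emph{endpoint} Petrov's condition (not the global one), and with the precise transversality value produced by Proposition \ref{NC++} and Remark \ref{WithPetrov}. This is exactly the content of \cite{MR3318195}, and the argument consists in matching its hypotheses to the present setup. The remaining two steps are essentially bookkeeping: plugging the differentiability of $T$ at $x_0$ into Theorem \ref{Lemma_sub} and reading off differentiability from the coincidence of super- and subdifferential.
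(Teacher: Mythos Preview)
Your proof is correct and follows essentially the same approach as the paper: both arguments combine the supergradient sensitivity relation from \cite{MR3318195} with the subgradient propagation of Theorem~\ref{Lemma_sub}, and conclude differentiability from the nonemptiness of both semidifferentials. The paper's version is slightly terser---it merely notes that $\partial^+ T(\overline{x}(t))$ and $\partial^- T(\overline{x}(t))$ are each nonempty, without insisting they share the same element $\overline{p}(t)$---but your explicit tracking of a single dual arc lying in both is an equally valid (and arguably cleaner) way to organize the same ingredients.
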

\begin{proof}
Showing that $T$ is differentiable at a certain point $z$ is equivalent to prove that $\partial^+ T(z)$ and $\partial^- T(z)$ are both nonempty. 
Thus, the conclusion in the case $(b)$ comes from Theorem \ref{Lemma_sub} together with %\ref{theoremSuperDiff} 
%and 
\cite[Theorem~4.1]{MR3318195}. %Concerning $(a)$, let us first remark that $H(\overline{x}(T),\nu) >0 $ for any $\nu\in N^{C}_{\mathcal{K}}(\overline{x}(T))$, $\nu \neq 0$
%(see \cite[Proposition 5.6]{MR2209357} for a proof of the fact that Petrov's condition can be equivalently stated via proximal normals or Clarke normals to the target 
%whenever the multifunction $F$ is upper semicontinuous with compact convex values). Thus, conclusion comes from Theorem \ref{Lemma_sub} and Theorem \ref{theoremSuperDiff} in the case $(a)$. 
%in the case $(a)$ and $(b)$, respectively. 
\end{proof}
\subsection{Second application: local $C^2$ regularity of the minimum time function}\label{sub:last}
Theorem \ref{TheoremCT} and \ref{Lemma_sub_prox:ch3} apply to show that the existence of a proximal subgradient of $T(\cdot)$ at $x$ is sufficient for the local regularity of $T(\cdot)$ in a neighborhood of the optimal trajectory starting form $x$. The proof is based upon ideas from \cite{frankowska:hal-00851752,SecondoNostro}.
\begin{theorem}\label{TheoLocalReg:ch3}
Assume $(SH), (A), (PC)$ and suppose that $H$ is of class $C^2(\mathbb{R}^n \times (\mathbb{R}^n \setminus \lbrace 0 \rbrace))$.  Let $x_0\in \mathcal{R}\smallsetminus \mathcal{K}$ and let $\overline{x}$ be an optimal trajectory starting from $x_0$ reaching $\mathcal{K}$ at time $T(x_0)$. If $\partial^{-,P} T(x_0)\neq \emptyset$, then $T$ is of class $C^2$ in a neighborhood of $\overline{x}([0,T(x_0)))$.
\end{theorem}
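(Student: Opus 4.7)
The plan is to combine the forward propagation of the proximal subdifferential from Theorem \ref{Lemma_sub_prox:ch3} with the conjugate-time criterion of Theorem \ref{TheoremCT}, using a two-sided quadratic control on $T$ along $\overline{x}$ to rule out blow-up of the associated Riccati matrix. First, pick $\bar p_0 \in \partial^{-,P}T(x_0)$; by $(PC)$, $\bar p_0\neq 0$, and Proposition \ref{NC++} together with Remark \ref{WithPetrov} yields an adjoint arc $\bar p$ along $\overline{x}$ satisfying the Hamiltonian system, with $\bar p(T(x_0))$ equal (up to the obvious normalization) to the inner-normal datum prescribed by \eqref{CJ*:ch3}; by Remark \ref{RemarkDualArc:ch3}, $\bar p$ never vanishes on $[0,T(x_0)]$. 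Theorem \ref{Lemma_sub_prox:ch3} then supplies uniform $c,r>0$ with $\bar p(t)\in\partial^{-,P}T(\overline{x}(t))$ (with the same constants) for every $t\in[0,T(x_0))$. Combined with the local semiconcavity of $T$ on $\mathcal{R}\setminus\mathcal{K}$---a standard consequence of $(SH),(A),(PC)$---and the existence, on every compact subset of $\mathcal{R}\setminus\mathcal{K}$, of a uniform semiconcavity constant $C$, one obtains the two-sided quadratic estimate
\begin{equation*}
-c\,|h|^2 \;\leq\; T(\overline{x}(t)+h)-T(\overline{x}(t))-\langle \bar p(t),h\rangle \;\leq\; C\,|h|^2
\end{equation*}
for all $t$ in any closed subinterval of $[0,T(x_0))$ and all sufficiently small $h$. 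In particular $T$ is differentiable at each $\overline{x}(t)$ with $\nabla T(\overline{x}(t))=\bar p(t)$.

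Set $\xi := \overline{x}(T(x_0))\in\partial\mathcal{K}$. By the choice of terminal data and uniqueness of the Hamiltonian flow, the backward characteristic $(Y(\xi,\cdot),P(\xi,\cdot))$ of \eqref{CJ*:ch3} coincides with the time-reversed pair $s\mapsto(\overline{x}(T(x_0)-s),\bar p(T(x_0)-s))$ on $[0,T(x_0)]$. Suppose, for contradiction, that $\xi$ has a conjugate time and let $\bar s\in(0,T(x_0)]$ be the smallest one; set $\bar t := T(x_0)-\bar s\in[0,T(x_0))$. No conjugate time exists on $[0,\bar s)$, so Theorem \ref{TheoremCT}(i) yields, for every $s\in(0,\bar s)$, an open neighborhood $\mathcal{D}_s$ of $\overline{x}(T(x_0)-s)$ on which $M$ is the graph of a $C^1$ function $\Phi_s$. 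Since $Y(\cdot,\cdot)$ is a local $C^1$-diffeomorphism there (invariance of domain together with $\det Y_{\xi,t}\neq 0$), the map $u(Y(\xi',\sigma)):=\sigma$ is $C^1$ on $\mathcal{D}_s$; by Corollary \ref{CorollaryOptimality}, each nearby backward characteristic is optimal on $[0,\bar s)$, hence $u\equiv T$ on $\mathcal{D}_s$. Thus $T\in C^2(\mathcal{D}_s)$, and Remark \ref{RemarkRapresentation} delivers
\begin{equation*}
\nabla^2 T(\overline{x}(T(x_0)-s)) \;=\; P_{\xi,t}(\xi,s)\,Y_{\xi,t}(\xi,s)^{-1} \;=\; R(\xi,s).
\end{equation*}

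Now the two-sided estimate forces $\|\nabla^2 T(\overline{x}(t))\|\leq 2\max(c,C)$ uniformly for $t$ in a neighborhood of $\bar t$ inside $(\bar t,T(x_0))$, whereas $\|R(\xi,s)\|\to+\infty$ as $s\nearrow\bar s$, i.e.\ as $t\searrow\bar t$: a contradiction. Hence no conjugate time exists on $[0,T(x_0)]$, Theorem \ref{TheoremCT}(i) holds on the whole interval, and the method-of-characteristics argument above shows $T\in C^2$ on the open set $\bigcup_{s\in(0,T(x_0)]}\mathcal{D}_s$, a neighborhood of $\overline{x}([0,T(x_0)))$. The delicate point I expect to negotiate most carefully is the identification $\nabla^2 T = R$ on an \emph{open} neighborhood of the trajectory: it requires simultaneously that nearby backward characteristics remain optimal (Corollary \ref{CorollaryOptimality}) and that $T$ agree with the characteristic time parameter $u$ on that neighborhood. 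Consequently, the contradiction cannot be obtained purely pointwise at $\bar t$ but must be read as a limit from above along the trajectory, using the \emph{uniform} constants $c,C$ supplied by Theorem \ref{Lemma_sub_prox:ch3} and semiconcavity---this is precisely what turns the first-order sensitivity relation into a second-order regularity statement.
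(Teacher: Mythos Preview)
Your proposal is correct and follows essentially the same route as the paper: identify the time-reversed optimal pair with the backward characteristic $(Y(\xi_0,\cdot),P(\xi_0,\cdot))$, use Theorem~\ref{TheoremCT} to get $C^2$ regularity (with Hessian $R$) before any conjugate time, and then rule out conjugate times by squeezing $R(\xi_0,\cdot)$ between the uniform lower bound from Theorem~\ref{Lemma_sub_prox:ch3} and the upper bound from semiconcavity of $T$. One small point of exposition to tighten: you invoke Proposition~\ref{NC++}/Remark~\ref{WithPetrov} to produce $\bar p$ from the terminal datum but then apply Theorem~\ref{Lemma_sub_prox:ch3}, which requires $\bar p(0)\in\partial^{-,P}T(x_0)$; the link is that semiconcavity plus $\partial^{-,P}T(x_0)\neq\emptyset$ forces $T$ to be differentiable at $x_0$, so $\bar p_0=\nabla T(x_0)$ and (by uniqueness of the Hamiltonian flow and of the normalized dual arc) $\bar p(0)=\bar p_0$---the paper makes this identification explicit at the outset.
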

\begin{proof}
Recall first that the minimum time function $T$ is semiconcave (see \cite{MR2918253}). Thus, it is well known that since $\partial^{-,P} T(x_0)\neq \emptyset$, $T$ must be differentiable at $x_0$. Therefore, the optimal trajectory for $x_0$ is unique, and we call it $\overline{x}$. It reaches the target at time $T(x_0)$. Set $\xi_0 := \overline{x}(T(x_0))$.\\
Thanks to assumption $(PC)$, $T$ is differentiable at $\overline{x}(t)$ for all $t\in [0,T(x_0))$ (see Theorem \ref{differentiability}). Recall that  and $- \nabla b_{\mathcal{K}}(\xi_0)$ is a proximal inner normal to $\mathcal{K}$ at $\xi_0$ with unit norm. Thus, $\overline{p}(t)=\nabla T(\overline{x}(t))$ where $\overline{p}:[0,T(x_0)]\rightarrow \mathbb{R}^n\setminus \lbrace 0\rbrace $ is such that the pair $(\overline{x},\overline{p})$ solves: for a.e. $t\in[0,T(x_0)]$, %\eqref{CJ:ch3} on $[0,T(x_0)]$ with $\xi$ equal to $\overline{x}(T(x_0))$. Let us denote by $(Y(\xi,\cdot),P(\xi,\cdot))$ the solution to the system \eqref{CJ*:ch3}. 
\begin{equation*}
\left\{\begin{array}{rll}
-\dot{x}(t)&=& \nabla_p H (x(t),p(t)),\\
\dot{p}(t)&\in & \partial_x H (x(t),p(t)),
\end{array}\right.
\quad p(T)=- \nabla b_{\mathcal{K}}(\xi_0) H(\xi_0,\nabla b_{\mathcal{K}}(\xi_0))^{-1}. 
\end{equation*} 
Set $Y(\xi_0,\cdot)= \overline{x}(T(x_0)-\cdot)$ and $P(\xi_0,\cdot)= \overline{p}(T(x_0)-\cdot)$. Thus, $(Y(\xi_0,\cdot),P(\xi_0,\cdot))$ solves \eqref{CJ*:ch3}.\\
Thanks to Theorem \ref{TheoremCT}, it is sufficient to prove that the interval $[0,T(x_0)]$  does not contain any conjugate time for $\xi_0$. Let us proceed by contradiction, assuming that there exists a conjugate time $\overline{t}$ for $\xi_0$ with $\overline{t}\in (0 ,T(x_0))$. Fix $t\in (0,\overline{t})$. By Theorem \ref{TheoremCT}, we deduce that there exists an open neighborhood $V_{\xi_0}\times U_t$ of $(\xi_0,t)$ in $\partial \mathcal{K}\times \mathbb{R}$ such that the function $T(\cdot)$ is of class $C^2$ in the open neighborhood $Y(V_{\xi_0},U_t)$ of $Y(\xi_0,t)$. Furthermore, the Hessian of $T$ is 
\begin{equation}\label{hessian} 
D^2 T(Y( \xi,s))= R_{\xi,t}(\xi,s):= P_{\xi,t}(\xi,s)Y_{\xi,t}^{-1}(\xi,s),~ (\xi,s)\in V_{\xi_0}\times U_t,
\end{equation}
where $(P_{\xi,t},Y_{\xi,t})$ is the solution to \eqref{CP_prem:ch3}. Recall that equality \eqref{hessian} has to be understood in the sense that we have explained in Remark \ref{RemarkRapresentation}. Thus, for all $(\xi,s)\in V_{\xi_0}\times U_t$ we have that
\begin{equation}\label{a}
\begin{split}
&T(Y(\xi,s) )- T(Y(\xi_0,t)) -\langle \nabla T(Y(\xi_0,t)), Y(\xi,s) - Y(\xi_0,t)\rangle \\
&-\frac{1}{2} \langle R_{\xi,t}(\xi_0,t)(Y(\xi,s) - Y(\xi_0,t)),Y(\xi,s) - Y(\xi_0,t)\rangle = o(| Y(\xi,s) - Y(\xi_0,t)|^2).
\end{split}
\end{equation}
Moreover, since $-\overline{p}(0) \in \partial^{-,P} T(x_0)$, by Theorem \ref{Lemma_sub_prox:ch3} there exists $R_0>0$ and $c_0\geq 0$ such that 
\begin{equation}\label{b}
T(y)- T(Y(\xi_0,t)) -\langle \nabla T(Y(\xi_0,t)), y - Y(\xi_0,t)\rangle \geq -c_0 |y - Y(\xi_0,t)|^2,
\end{equation}
whenever $y\in B(Y(\xi_0,t),R_0)$ and $t\in [0,T(x_0)]$.  Without lost of generality, we can suppose that  $Y(V_{\xi_0},U_t) \subset B(Y(\xi_0,t),R_0) $. Then, by \eqref{a} and \eqref{b},
\begin{equation}\label{c}
\langle R_{\xi,t}(\xi_0,t)(Y(\xi,s) - Y(\xi_0,t)),Y(\xi,s) - Y(\xi_0,t)\rangle \geq -c_0 | Y(\xi,s) - Y(\xi_0,t)|^2,
\end{equation}
for all $(\xi,s)\in V_{\xi_0}\times U_t$. Since $Y(V_{\xi_0},U_t)$ is an open neighborhood of $ Y(\xi_0,t)$ in $\mathbb{R}^n$, from \eqref{c} we deduce that 
\begin{equation}\label{d}
\langle R_{\xi,t}(\xi_0,t)\theta,\theta\rangle \geq -c_0, \quad \forall\ \theta\in S^{n-1}.
\end{equation}
This provides a bound from below, uniform in $[0,\overline{t})$, of the quadratic form associated to $R_{\xi,t}(\xi_0,t)$. Furthermore, since $T$ is semiconcave it holds that for any $\nu \in\mathbb{R}^n$ such that $\mid \nu\mid =1$ we have $\frac{\partial^2 T}{\partial \nu^2}\leq C$ in the sense of distributions, where $C$ is the semiconcavity constant of $T$ (see, for instance, \cite[Proposition~1.1.3]{MR2041617}). Since $T$ is twice differentiable on $Y(V_{\xi_0},U_t)$ for all $t\in [0,\overline{t})$, the distributional Hessian coincides with the classical Hessian \eqref{hessian} on such sets.  We conclude that $R_{\xi,t}(\xi_0,t)$ must be bounded from above on $[0,\overline{t})$ by $C$, still in the sense of quadratic form. 
On the other hand, the operator norm of $R_{\xi,t}(\xi_0,t)$ goes to infinity as $t\rightarrow \overline{t}$, since we have supposed that $\overline{t}$ is conjugate for $\xi_0$. %On the other hand, the quadratic form of $R_{\xi,t}(\xi_0,t)$ must be bounded from above on $[0,\overline{t})$ by the semiconcavity constant of $T$. Furthermore, the propagation of the proximal subdifferentiability of $T$ from $x_0$ together with \eqref{2} provides an uniform bound from below to $R_{\xi,t}(\xi_0,t)$, still in the sense of quadratic form. The proof of this last point follows very closely the reasoning that we have developed in the proof of \cite[Theorem 4.5]{CFS}, so we omit here all the details. 
These facts together give a contradiction. 
Summarizing, we have proved that the all interval $[0,T(x_0)]$ does not contain conjugate times for $\xi_0$, and we conclude by Theorem \ref{TheoremCT} that the minimum time function $T$ is of class $C^2$ in a neighborhood of $\overline{x}([0,T(x_0)))$.
\end{proof}
\begin{example}
Here are some examples of systems with a smooth Hamiltonian. We consider a dynamic ${\dot x}= f(x,u)$ given by the \emph{control-affine system} with drift:
\begin{equation}\label{ex}
 f(x,u)= h(x)+ F(x)u, \; u\in U, 
 \end{equation}                               
where $U\subset \mathbb{R}^m$ is the closed unit ball,  $h:\mathbb{R}^n\rightarrow \mathbb{R}^n$ is the drift and
$F:\mathbb{R}^n \rightarrow \mathbb{R}^{n \times m}$ is a matrix-valued function defined by $m$ vector fields $f_i:\mathbb{R}^n \rightarrow \mathbb{R}^n$, $i=1,...m$, as $F(\cdot)=[f_1(\cdot),...,f_m(\cdot)]$. Suppose hereafter that the vector fields $h$ and $f_i$, $i=1,...,m$, are of class $C^2$ with sublinear growth.
The Hamiltonian associated to this system is $$ H(x,p)= \langle p,h(x)\rangle +  \mid F(x)^* p \mid.$$ 
We give the following two examples:
\begin{itemize}
 \item We assume that $F(x)$ is surjective for all $x\in\mathbb{R}$ (that is, $m \geq n$ and that the rank of $F(x)$ is equal to $n$ for all $x\in\mathbb{R}^n$ ). 
 Note that if $F(x)$ is surjective, then $F(x)^*$ is injective. So its kernel is reduced to the singleton set $\lbrace 0 \rbrace$.
 Thus, for all $x,p\in\mathbb{R}^n$, $p\neq 0$,
$$ H_p(x,p)= h(x)+ \frac{F(x)F^*(x)p}{|F^*(x)p |}, $$
$$ H_{pp}(x,p)= \frac{F(x)F^*(x) }{|F^*(x)p \mid } -  \frac{F(x)F(x)^* p \otimes F(x)F^*(x) p }{|F^*(x)p|^3}.$$
Thus, $F$ satisfies assumption $(SH)$ and $H$ is clearly of class $C^2 (\mathbb{R}^n
\times ( \mathbb{R}^n \setminus \lbrace 0 \rbrace)).$ Note that also hypothesis $(H2)$ is satisfied. Similarly, we can consider strictly convex sets $U$ with sufficiently smooth boundary.\\ 
Let us remark that the special case where $m=n$, $h\equiv 0$ and $F(x)$ is invertible for all $x\in\mathbb{R}^n$ corresponds to a Riemannian type problem. %, i.e.,
%to find from among all paths $\gamma(\cdot)$ joining a point $x \in \mathbb{R}^n$ to a nonempty closed set $\mathcal{K}\subset\mathbb{R}^n$ ($\gamma(0)=x$, $\gamma(1)\in \mathcal{K}$) 
%the one that minimizes
%\[ \int_0^1 dt \; g_{\gamma(t)}(\dot{\gamma}(t),\dot{\gamma}(t)) , \]
%where $g_x(v,w):=\langle (F(x)^{-1})^* F(x)^{-1} v,w \rangle$ is a Riemannian metric on $\mathbb{R}^n$. 
%Note that the Riemannian distance between $x$ and 
%$\mathcal{K}$ coincides with the minimum time function $T(\cdot)$ --evaluated at $x$-- to the problem with target $\mathcal{K}$ and dynamic $\dot{x}= F(x)u,\; u \in U$ where $U$ is the unit closed $n$-dimensional ball.\\
We are not in the standard Riemannian situation whenever $F(x)$ is not invertible, and in particular when  $m < n$.
We deal with subriemannnian type problems in the example below.
\item We consider the control system \eqref{ex} without drift, i.e. $h\equiv 0$.
Let us first recall the notion of singular optimal trajectory for such control system. Let us introduce the pre-Hamiltonian
$$ H_0(x,p,u)= \sum_{i=1}^m u_i \langle p_i, f_i(x)\rangle, $$
that is smooth on $\mathbb{R}^n \times \mathbb{R}^n \times U$ because of the smoothness of the fields $f_i$.
A trajectory $x:[0,T]\rightarrow\mathbb{R}^n $ reaching the target at time $T$ (and corresponding to the control $u(\cdot)$) is said to be \emph{singular} if and only if there exists an absolutely continuous arc $p:[0,T]\rightarrow \mathbb{R}^n\setminus \lbrace 0 \rbrace$ such that $(x(\cdot),p(\cdot),u(\cdot))$ solves the system
\begin{equation}\label{nonSing}
\left\{\begin{array}{rll}
-\dot{x}(t)&= \nabla_p H_0 (x(t),p(t),u(t)),\\
\dot{p}(t) &= \nabla_x H_0 (x(t),p(t),u(t)),
\end{array}\right.
\quad t\in [0,T],
\end{equation}
and $p(\cdot)$ is orthogonal to each vector $f_1(x(\cdot)),...,f_m(x(\cdot))$ on $[0,T]$ (i.e. $p(t)\in \ker F(x(t))^*$ for all $t\in [0,T]$). The control $u$ is said to be singular as well. \\
Let $\overline{x}(\cdot)$ be a nonsingular optimal trajectory and let $\overline{p}(\cdot)$ a dual arc solving \eqref{nonSing} 
and such that $\overline{p}(s)\not\in \ker F(\overline{x}(s))^*$ for a time $s$ in the interval $[0,T]$. %The constancy of the map 
%$t\mapsto H(\overline{x}(t),\overline{p}(t))$ implies that  $\overline{p}(t)\not\in \ker F(\overline{x}(t))^*$ for any $t\in %[0,T]$.
Thus, even if $F(\cdot)$ is not surjective, in a neighborhood of $(\overline{x}(\cdot),\overline{p}(\cdot))$ the Hamiltonian $H$ satisfies assumption $(H)$ and is of class $C^2$, and $(\overline{x}(\cdot),\overline{p}(\cdot))$ solves also the system
\begin{equation}
\left\{\begin{array}{rll}
-\dot{x}(t)&= \nabla_p H (x(t),p(t)),\\
\dot{p}(t) &= \nabla_x H (x(t),p(t)),
\end{array}\right.
\quad t\in [0,T].
\end{equation}
%This ensures that Theorem \ref{TheoLocalReg} is applicable to the optimal trajectory $\overline{x}$. 
%Indeed, the regularity of the Hamiltonian on a neighborhood of the pair $(\overline{x},\overline{p})$ clearly suffices to make the proof of such theorem.
To conclude, we recall that a sufficient condition to exclude the presence of nontrivial singular control 
is that the distribution corresponding to $f_1,...,f_m$ is \emph{fat}, i.e., the control system is \emph{strongly bracket generating}. 
There is a vast literature devoted to this topic.
The interested reader is referred to, e.g., \cite{ MR3308372,MR1867362} for more details on geometric control.
\end{itemize}
\end{example}

\section*{Acknowledgement}
This research is partially supported by the European Commission (FP7-PEOPLE-2010-ITN, Grant Agreement no.\ 264735-SADCO), and by the INdAM National Group GNAMPA. This work was completed while the first author was visiting the Institut Henri Poincar\'e and Institut des Hautes \'Etudes Scientifiques on a senior  CARMIN position. The authors are grateful to the anonymous referee for her/his useful comments.

%\addcontentsline{toc}{chapter}{\bibname}
% \defbibfilter{altro}{}
%\printbibliography
\end{document}